\newtheorem{theorem}{Theorem}[section]
\newtheorem{lemma}[theorem]{Lemma}
\newtheorem{corollary}[theorem]{Corollary}
\newtheorem{definition}[theorem]{Definition}
\newtheorem{remark}[theorem]{\it Remark}
\newtheorem{example}[theorem]{Example}
\newtheorem{proposition}[theorem]{Proposition}
\newtheorem{conjecture}[theorem]{Conjecture}
\def\C{\mathbb{C}}
\def\R{\mathbb{R}}
\def\Z{\mathbb{Z}}
\def\tree{\mathcal{T}}
\def\Z{\mathbb{Z}}
\title{The Algebra of Conformal Blocks}
\author{Christopher Manon}
\thanks{This work was supported by the NSF fellowship DMS-0902710}
\begin{document}

\begin{abstract}
For each simply  connected, simple complex group $G$ we show that the direct sum of all vector bundles of conformal blocks on the moduli stack $\bar{\mathcal{M}}_{g, n}$ of stable marked curves carries the structure of a flat sheaf of commutative algebras.  The fiber of this sheaf over a smooth marked curve $(C, \vec{p})$ agrees with the Cox ring of the moduli of quasi-parabolic principal $G-$bundles on $(C, \vec{p})$.   We use the factorization rules on conformal blocks to produce flat degenerations of these algebras.   In the $SL_2(\C)$ case, these degenerations result in toric varieties which appear in the theory of phylogenetic statistical varieties, and the study of integrable systems in the moduli of rank $2$ vector bundles. We conclude with a combinatorial proof that the Cox ring of the moduli stack of quasi-parabolic $SL_2(\C)$ principal bundles over a generic curve is generated by conformal blocks of levels $1$ and $2$ with relations generated in degrees $2, 3,$ and $4$.   
\end{abstract}

\maketitle

\tableofcontents

\smallskip
\noindent

Keywords: conformal blocks, principal bundles, phylogenetics.

MSC classifications: 14D20, 05E10

\section{Introduction}

Let $(C, \vec{p})$ be a smooth, complex, projective curve with distinct marked points $\vec{p} = \{p_1, \ldots, p_n\}$  $\subset C$, and let $G$ be a simple, simply connected complex group.  We fix a Borel subgroup $B$ and choose a parabolic $\Lambda_i$ containing $B$ for each $p_i$.   A quasi-parabolic principal $G-$bundle on $C$ of type $\vec{\Lambda} = \{\Lambda_1, \ldots, \Lambda_n\}$ is a principal $G-$bundle $E \to C$ together with a choice of a point $\rho_i$ in the fiber over $p_i$ of the associated bundle $E\times_G (G/\Lambda_i)$ (equivalently, a choice of right $\Lambda_i$ orbit in the fiber of $E$ over $p_i$).  We study the Cox ring, or total coordinate ring, of the moduli stack $\mathcal{M}_{C, \vec{p}}(\vec{\Lambda})$ of these objects.

The Cox ring associated to this moduli problem is the direct sum of all the spaces of global sections of line bundles on the stack taken over the torsion free part of the Picard group.  The group $Pic(\mathcal{M}_{C, \vec{p}}(\vec{\Lambda}))$ is computed by Laszlo and Sorger in \cite{LS}, where they show it is a free Abelian group:

\begin{equation}\label{pic}
Pic(\mathcal{M}_{C, \vec{p}}(\vec{\Lambda})) = \mathcal{X}(\Lambda_1)\times \ldots \times \mathcal{X}(\Lambda_n) \times \Z.\\
\end{equation}

\noindent
Here $\mathcal{X}(\Lambda_i)$ is the group of characters of the parabolic subgroup $\Lambda_i \subset G.$ A celebrated result of  Faltings \cite{F}, Kumar, Narasimhan, Ramanathan \cite{KNR}, Beauville, Laszlo, Sorger \cite{BL}, \cite{BLS}, \cite{S}, and Pauly \cite{P}  identifies global sections of line bundles on $\mathcal{M}_{C, \vec{p}}(\vec{\Lambda})$ with the spaces of conformal blocks from the Wess-Zumino-Novikov-Witten (WZNW) model of conformal theory.
For a fixed curve $(C, \vec{p})$, there is one such space $ \mathcal{V}^{\dagger}_{C, \vec{p}}(\vec{\lambda}, L)$ for each choice of dominant weights $\lambda_i \in \mathcal{X}(\Lambda_i)$ and a non-negative integer $L \in \Z_{\geq 0}:$ 

\begin{equation}\label{lineblock}
H^0(\mathcal{M}_{C, \vec{p}}(\vec{\Lambda}), \mathcal{L}(\vec{\lambda}, L)) = \mathcal{V}^{\dagger}_{C, \vec{p}}(\vec{\lambda}, L).\\
\end{equation}

The Cox ring of $\mathcal{M}_{C, \vec{p}}(\vec{\Lambda})$ is therefore the sum of all the spaces of conformal blocks with compatible parabolic data:

\begin{equation}
Cox(\mathcal{M}_{C, \vec{p}}(\vec{\Lambda})) = \bigoplus_{\vec{\lambda}, L} H^0(\mathcal{M}_{C, \vec{p}}(\vec{\Lambda}), \mathcal{L}(\vec{\lambda}, L)) = \bigoplus_{\vec{\lambda}, L} \mathcal{V}^{\dagger}_{C, \vec{p}}(\vec{\lambda}, L).\\
\end{equation}

 The main theorem of this paper produces a family of flat degenerations of $Cox(\mathcal{M}_{C, \vec{p}}(\vec{\Lambda}))$  from the combinatorial properties of the WZNW theory.  We state this theorem for $\Lambda_i = B \subset G$ a Borel subgroup, denoted $\mathcal{M}_{C, \vec{p}}(G)$, as all other cases are implied by this case.  In what follows, $\Gamma$ is a graph with non-leaf vertex set $V(\Gamma)$ and edge set $E(\Gamma)$. 

\begin{theorem}\label{main}
For $C, \vec{p}$ a marked stable curve, there is a flat degeneration of $Cox(\mathcal{M}_{C, \vec{p}}(G))$ for every trivalent graph $\Gamma$ with first Betti number $g = genus(C)$ and $n = |\vec{p}|$ leaves:

\begin{equation}
Cox(\mathcal{M}_{C, \vec{p}}(G))  \Rightarrow [\bigotimes_{v \in V(\Gamma)} Cox(\mathcal{M}_{0,3}(G))]^{T_{\Gamma}}.\\
\end{equation}

\noindent
Here $T_{\Gamma}$ is a product of $|E(\Gamma)| - n$ tori $T\times \C^*$, where $T \subset G$ a maximal torus. 
\end{theorem} 

For a description of the action of $T_{\Gamma}$ on $\bigotimes_{v \in V(\Gamma)} Cox(\mathcal{M}_{0,3}(G))$ see Section \ref{filter}. 
Theorem \ref{main} is a ``ringification'' of the $factorization$ $rules$ for conformal blocks (see Section \ref{filter}); and in keeping with the flavor of factorization, many algebraic properties of the Cox ring of $\mathcal{M}_{C, \vec{p}}(G)$ can be understood in terms of the $3-$pointed, genus $0$ case.  For example, in the $SL_2(\C)$ case (discussed in more detail below), Theorem \ref{main} is used to show that $Cox(\mathcal{M}_{C, \vec{p}}(SL_2(\C))$ is generically finitely generated (Theorem \ref{degensl2} and Theorem \ref{sl2gen}).   Furthermore, it can be shown that $Cox(\mathcal{M}_{C, \vec{p}}(SL_2(\C))$ is a Gorenstein algebra using an argument along the lines of \cite[Theorem $7.3$]{M4}.

Let $R_{C, \vec{p}}(\vec{\lambda}, L) = \bigoplus_{N \geq 0} H^0(\mathcal{M}_{C, \vec{p}}(N\vec{\Lambda}), \mathcal{NL}(\vec{\lambda}, L))$ $=\bigoplus_{N \geq 0}  \mathcal{V}^{\dagger}_{C, \vec{p}}(N\vec{\lambda}, NL)$  denote the projective coordinate ring corresponding to the line bundle $\mathcal{L}(\vec{\lambda}, L)$ on $\mathcal{M}_{C, \vec{p}}(\vec{\Lambda})$, note that this is a graded subalgebra of  $Cox(\mathcal{M}_{C, \vec{p}}(\vec{\Lambda}))$.
The corresponding coarse moduli space $Proj(R_{C, \vec{p}}(\vec{\lambda}, L))$ is denoted $M_{C, \vec{p}}(\vec{\lambda}, L)$. The celebrated Verlinde formula (\cite{Verlinde}, \cite{F}, \cite{B}) calculates the dimension of the space of global sections $H^0(\mathcal{M}_{C, \vec{p}}(\vec{\Lambda}), \mathcal{L}(\vec{\lambda}, L)).$   We view Theorem \ref{main} as a first step toward developing a polyhedral rule for computing the Verlinde formula, which should correspond to toric degenerations of the coarse moduli $M_{C, \vec{p}}(\vec{\lambda}, L)$.

 A toric degeneration of an algebra is a flat family of algebras over some base, with a special fiber equal to the semigroup algebra of a normal affine semigroup.   Presentation results, and certain algebraic properties (e.g. Gorenstein, Koszul) can be easier to prove on an algebra with a toric degeneration, as these properties are controllable under flat degeneration and are more readily established by combinatorial means on the special fiber of the degeneration.  Theorem \ref{main} reduces the problem to finding such a degeneration for $R_{C, \vec{p}}(\vec{\lambda}, L)$ or $Cox(\mathcal{M}_{C, \vec{p}}(G))$ to finding a degeneration for $ Cox(\mathcal{M}_{0,3}(G))$ which respects the multigrading by dominant weights.  When $G = SL_2(\C),$ the algebra $Cox(\mathcal{M}_{0,3}(SL_2(\C)))$ is already an affine semigroup algebra, so in this case our degenerations are toric.  The relevant affine semigroup algebras are the graded algebras associated to the following polytopes. 

\begin{definition}
For $\Gamma$ a trivalent graph, let $P_{\Gamma}$ be the polytope of weightings $w: E(\Gamma) \to \R_{\geq 0}$ which satisfy the following properties at each internal vertex $v \in V(\Gamma).$

\begin{enumerate}
\item The sum of the three weights incident on a vertex $w_1(v) + w_2(v) + w_3(v)$ is $\leq 2$\\
\item These three weights satisfy the triangle inequalities, $|w_1(v) - w_3(v)| \leq w_2(v) \leq w_1(v) + w_3(v)$\\
\end{enumerate}

\end{definition}

Also see \cite{Bu} for a description of these polytopes.
We consider the $P_{\Gamma}$ and their Minkowski sums with respect to the lattice $\mathcal{L}_{\Gamma} \subset \R^{E(\Gamma)}$ defined by the condition that all edges are weighted with integers, and $w_1(v) + w_2(v) + w_3(v) \in 2\Z$ for all $v \in V(\Gamma).$  An affine semigroup is obtained from $P_{\Gamma}$ by considering the lattice points in the Minkowski sums $L\circ P_{\Gamma} = \{ u_1 + \ldots + u_L | u_i \in P_{\Gamma}\}.$ The union $S(P_{\Gamma}) = \coprod_{L \geq 0} L\circ P_{\Gamma} \cap \mathcal{L}_{\Gamma}$ is naturally graded, and it is easy to see that if $u_1 \in L\circ P_{\Gamma}$ and $u_2 \in K \circ P_{\Gamma}$, then $u_1 + u_2 \in (L + K)\circ P_{\Gamma},$ where the operation $+$ is sum of integer valued functions on $E(\Gamma).$  Let $\C[P_{\Gamma}]$ be the affine semigroup algebra obtained from the graded affine semigroup $S(P_{\Gamma})$; note that $\C[P_{\Gamma}]$ comes with a distinguished basis in bijection with the graded set of lattice points. We show the following in Section \ref{sl2}. 

\begin{theorem}\label{degensl2}
Let $C, \vec{p}$ be an $n$-marked smooth, projective curve of genus $g$, and $\Gamma$ a trivalent graph with first Betti number $g$ and $n$ leaves.  There is a flat degeneration:

\begin{equation}
Cox(\mathcal{M}_{C, \vec{p}}(SL_2(\C))) \Rightarrow \C[P_{\Gamma}].\\
\end{equation}

\end{theorem}

  Recall that dominant weights of $SL_2(\C)$ are non-negative integers, therefore we may associate a projective coordinate ring of the moduli of $SL_2(\C)$ quasi-parabolic principal bundles $R_{C, \vec{p}}(\vec{r}, L)$ to the data $(\vec{r}, L) \in \Z_{\geq 0}^{n+1}$. As a corollary of Theorem \ref{degensl2} we also obtain explicit toric degenerations of spaces $M_{C, \vec{p}}(\vec{r}, L) = Proj(R_{C, \vec{p}}(\vec{r}, L))$. 

\begin{definition}
let $P_{\Gamma}(\vec{r}, L)$ be the polytope obtained as the fiber over $\vec{r}$ for the map
$L \circ P_{\Gamma} \to \R^n$, computed by forgetting all weights except those on the leaf-edges. 

\end{definition}

Let $\C[P_{\Gamma}(\vec{r}, L)]$ be the affine semigroup defined by the polytope $P_{\Gamma}(\vec{r}, L)$. 
For $G = SL_2(\C)$, $C, \vec{p}$ a marked smooth projective curve, and $\Gamma$ a trivalent graph with compatible information, there is a flat degeneration,

\begin{equation}
R_{C, \vec{p}}(\vec{r}, L)) \Rightarrow \C[P_{\Gamma}(\vec{r}, L)].\\
\end{equation}

Theorem \ref{degensl2} is utilized in \cite{M} and \cite{M3} to prove that the projective coordinate ring of the square of any effective
line bundle on the moduli stack $\mathcal{M}_{C, \vec{p}}(SL_2(\C))$ is generated by its degree $1$ elements, and is a Koszul algebra for generic $C, \vec{p}$. This result follows from the analysis of $P_{\Gamma}(\vec{r}, L)$ for particular well-chosen trivalent graphs.  We follow a similar
strategy here, by studying a particular polytope $P_{\Gamma_{g, n}}$, we prove the following. 

\begin{theorem}\label{sl2gen}
For generic $C, \vec{p}$, the algebra $Cox(\mathcal{M}_{C, \vec{p}}(SL_2(\C)))$ is generated by conformal blocks
of level $L = 1, 2$.  The corresponding ideal of relations is generated in levels $2, 3, 4$. 
\end{theorem}

We prove Theorem \ref{sl2gen} by establishing these properties for the algebra $\C[P_{\Gamma_{g, n}}]$ in Section \ref{sl2}, the theorem then holds for  $Cox(\mathcal{M}_{C, \vec{p}}(SL_2(\C)))$ because of general properties of flat families of algebras. In particular, the degrees of generators and relations needed to present a particular algebra in a flat family bound such degrees generically, for more discussion on this point see \cite[Theorem 1.11]{M3}.

 Theorem \ref{sl2gen} is a simultaneous generalization of a theorem of Castravet and Tevelev \cite{CT}, Sturmfels and Xu, \cite{StXu}, and a theorem of Abe \cite{A}.  Castravet and Tevelev, along with Sturmfels and Xu treat the case $g  = 0$, and show that $L = 1$ conformal blocks generate with quadratic relations by utilizing a theorem of Bauer \cite{Ba} and results of Buczy\'{n}ska and Wi\'{s}niewski, \cite{BW}, discussed below. Abe treats the case $n = 0,$ and shows that $L =1$ conformal blocks generate by combining a proof that $L= 1, 2$ suffice with a result of Beauville \cite{B2} which establishes that the $L=1$ component generates $L=2$. 

Theorem \ref{sl2gen} should be of interest in the emerging field of Newton-Okounkov bodies, \cite{KK}, \cite{LM}, \cite{HK}.   We show in Subsection \ref{NOKbody}, Proposition \ref{NOK}, that the polytope $P_{\Gamma}$ is a Newton-Okounkov body of the scheme $Proj(V_{C, \vec{p}}(SL_2(\C))$, when $(C, \vec{p})$ is a stable curve of type $\Gamma.$

\subsection{Organization and methods}

Theorem \ref{main} is a consequence of the commutative algebra analogues
of classical results from the theory of conformal blocks proved in \cite{TUY}.   In what follows $\bar{\mathcal{M}}_{g, n}$ denotes the Deligne Mumford stack of stable $n$-pointed curves of genus $g$ (see \cite{DM}). 

\begin{enumerate}\label{bundle}\label{factorization}
\item (Flatness) The spaces $\mathcal{V}_{C, \vec{p}}^{\dagger}(\vec{\lambda}, L)$ fit together into a vector bundle $\mathcal{V}^{\dagger}(\vec{\lambda}, L)$ over $\bar{\mathcal{M}}_{g, n}$.\\
\item (Factorization) For $\tilde{C}$ the partial normalization of a stable curve $C$ at a double point $q,$ there is an isomorphism of vector spaces,

$$
\begin{CD}
\mathcal{V}^{\dagger}_{C, \vec{p}}(\vec{\lambda}, L) @<\cong<< \bigoplus_{\alpha \in \Delta_L} \mathcal{V}^{\dagger}_{\tilde{C}, \vec{p}, q_1, q_2}(\vec{\lambda}, \alpha, \alpha^*, L)\\
\end{CD}
$$
here the point $q_1$ is always assigned the dual weight $\alpha^*$ to the weight $\alpha$ assigned to its partner $q_2$ (see Figure \ref{fig:4}). \\
\end{enumerate}

\begin{figure}[htbp]
\centering
\includegraphics[scale = 0.35]{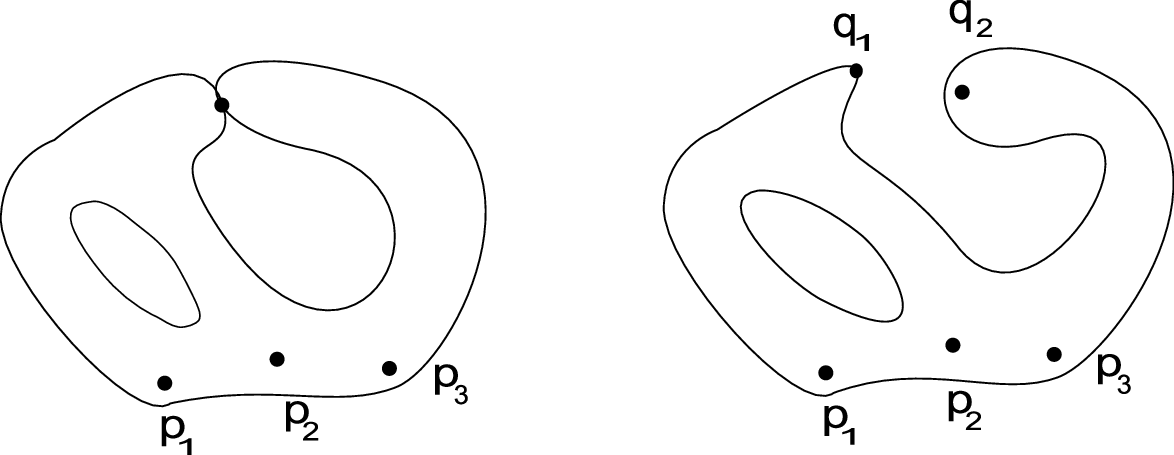}
\caption{Normalization of a  triple marked stable genus $2$ curve.}
\label{fig:4}
\end{figure}

 In Section \ref{sheaf} we build a multiplication operation on the direct sum $\mathcal{V}^{\dagger}(G) = \bigoplus_{\vec{\lambda}, L} \mathcal{V}^{\dagger}(\vec{\lambda}, L)$ from elements of Kac-Moody representation theory.  The following is a consequence of this construction. 

\begin{proposition}\label{T1}
For any simple Lie algebra $\mathfrak{g}$ with associated simple, simply connected
reductive group $G$, the direct sum of vector bundles,

\begin{equation}\mathcal{V}^{\dagger}(G) = \bigoplus_{\vec{\lambda}, L} \mathcal{V}^{\dagger}(\vec{\lambda}, L)\end{equation}

\noindent
has the structure of a flat sheaf of algebras on  $\bar{\mathcal{M}}_{g, n}.$  Over a smooth marked curve $(C, \vec{p}),$ multiplication on this sheaf agrees with multiplication of global sections on the corresponding line bundles over the moduli $\mathcal{M}_{C, \vec{p}}(G).$ 

\begin{equation}
\mathcal{V}_{C, \vec{p}}^{\dagger}(G) \cong Cox(\mathcal{M}_{C, \vec{p}}(G)).\\
\end{equation}
\end{proposition}

We call $\mathcal{V}_{C, \vec{p}}^{\dagger}(G)$ the algebra of conformal blocks over $C, \vec{p}$.   The global object $\mathcal{V}^{\dagger}(G)$ relates the Cox ring of $\mathcal{M}_{C, \vec{p}}(G)$ to the algebra of conformal blocks for a non-smooth, stable curve by a flat family. 

Recall that $\bar{\mathcal{M}}_{g, n}$ is stratified by the stability type of the curves $C, \vec{p},$ this is the source of the graph combinatorics in Theorem \ref{main}.    The strata of $\bar{\mathcal{M}}_{g, n}$ are indexed by connected graphs $\Gamma$ with $n$ labelled leaves. Each internal vertex of the graph is labelled with a number $g_i,$ thought of as the ``internal genus'' of that vertex (see Figure \ref{fig:GRAPH}). The genus $g$ of the whole graph is computed by summing these numbers and adding the first Betti number of $\Gamma$.  An internal vertex corresponds to a smooth component in the normalization of a representative curve of the stratum, and leaves correspond to marked points.  The lowest strata of $\bar{\mathcal{M}}_{g, n}$ are isolated points indexed precisely by trivalent graphs. In Section \ref{filter}, we utilize the factorization rules of conformal blocks to degenerate the algebra over non-smooth, stable curves. 

\begin{proposition}\label{T2}
Let $C, \vec{p}$ be a stable curve of stability type $\Gamma,$ with smooth normalization $\tilde{C}, \vec{p}, \vec{q}_1, \vec{q}_2$. There is a flat degeneration, 

\begin{equation}
\mathcal{V}^{\dagger}_{C, \vec{p}}(G) \Rightarrow [\mathcal{V}^{\dagger}_{\tilde{C}, \vec{p}, \vec{q}_1, \vec{q}_2}(G)]^{T_{\Gamma}}.\\
\end{equation}

Here $T_{\Gamma} = (T \times \C^*)^{|E(\Gamma)| - |\vec{p}|}$, where $T\subset G$ is a maximal torus. 
\end{proposition}

\noindent
Taken together, Propositions \ref{T1} and \ref{T2} prove Theorem \ref{main}. 

\begin{figure}[htbp]
\centering
\includegraphics[scale = 0.3]{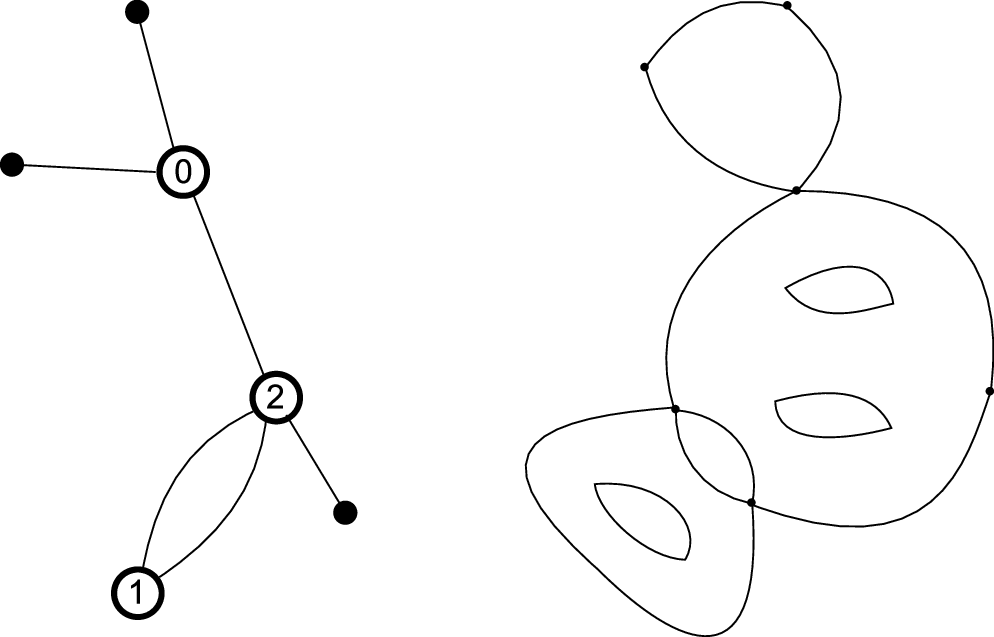}
\caption{The graph of a stable curve type.}
\label{fig:GRAPH}
\end{figure}

\subsection{The relationship with configuration spaces}

There is a natural map relating the space of conformal blocks with labels $\vec{\lambda}$ to the space of $\mathfrak{g}$ invariants in $V(\vec{\lambda}^*) = V(\lambda_1^*)\otimes \ldots \otimes V(\lambda_n^*).$  This is called the correlation map, see the book of Ueno \cite{U} for more information on its properties.  

\begin{equation}F_{C, \vec{p}}: \mathcal{V}_{C, \vec{p}}^{\dagger} (\vec{\lambda}, L) \to Hom_{\C}(V(\vec{\lambda})/ \mathfrak{g} V(\vec{\lambda}), \C)  \cong V(\vec{\lambda^*})^{\mathfrak{g}}\end{equation}

\noindent
 When the genus of the curve $C$ is $0,$ the map $F_{C, \vec{p}}$ is injective, allowing us to make certain aspects of conformal blocks more concrete by relating them to vector spaces from classical representation theory.  Let $U \subset G$ be the maximal unipotent subgroup contained in $B$, and let $\mathfrak{A}_n$ be the algebra invariants in the tensor product $\C[G/U]^{\otimes n}$ with respect to the left diagonal action by $G$.    The space of invariants $V(\vec{\lambda^*})^{\mathfrak{g}}$ is a subspace of $\mathfrak{A}_n$ and is also the space of global sections of a line bundle $\mathcal{L}(\vec{\lambda})$ on the space of configurations $M_{\vec{\lambda}}$, see Subsection \ref{cspaces}. Furthermore, the projective coordinate ring $\mathfrak{A}_{\vec{\lambda}} = \bigoplus_{N \geq 0} H^0(M_{\vec{\lambda}}, \mathcal{L}(\vec{\lambda})^{\otimes N})$ is always a subalgebra of $\mathfrak{A}_n$.   In Sections \ref{sheaf} and \ref{branch} we show that the correlation map can be enhanced to a map of algebras $F_{C, \vec{p}}: \mathcal{V}^{\dagger}_{C, \vec{p}}(G) \to \hat{\mathfrak{A}}_n$, where $\hat{\mathfrak{A}}_n$ is a certain Rees algebra of $\mathfrak{A}_n$ (see Section \ref{branch}). This relationship then passes to a graded inclusion $R_{C, \vec{p}}(\vec{\lambda}, L) \subset \mathfrak{A}_{\vec{\lambda}}$. In Section \ref{branch} we then relate the degenerations constructed by Theorem \ref{main} to degenerations of configuration spaces constructed in \cite{M2} and \cite[Proposition 4.9]{M14}.

\begin{theorem}\label{G0}
Let $(C, \vec{p})$ be a marked, stable, genus $0$ curve of type $\tree.$ 
Then the degeneration from Theorem \ref{T2} on $\mathcal{V}_{C, \vec{p}}^{\dagger}(G)$ extends to a degeneration on $\hat{\mathfrak{A}}_n$ corresponding to $\tree$ under the correlation morphism $F_{C, \vec{p}}.$  Furthermore, the induced degeneration of $R_{C, \vec{p}}(\vec{\lambda}, L)$ extends to a degeneration on $\mathfrak{A}_{\vec{\lambda}}$.
\end{theorem}

  For $\mathfrak{g} = sl_2(\C),$ the algebra $\mathfrak{A}_n$ is the projective coordinate ring of the Grassmannian variety $Gr_2(\C^n)$.  In \cite{SpSt}, Speyer and Sturmfels describe the tropical variety $T^{n, 2}$ of $Gr_2(\C^n)$ with respect to the embedding in $\mathbb{P}^{\binom{n}{2}-1}$ given by the Pl\''ucker generators of its coordinate ring.  They show that $T^{n, 2}$ has a maximal face for each trivalent tree with $n$ ordered leaves.  As $T^{n, 2}$ is a subfan of the Gr\''obner fan of $Gr_2(\C^n)$, each point $w \in T^{n, 2}$ defines an initial ideal $I_w$ of the ideal of relations on the Pl\''ucker generators.  The degeneration of $\mathfrak{A}_n$ associated to a trivalent tree $\tree$ with $n$ ordered leaves constructed in Theorem \ref{G0} is then presented by an initial ideal $I_w$ from the associated face of $T^{n, 2}$ (see \cite{HMM}, \cite{HMSV}).  These degenerations are used by Howard, Millson, Snowden, and Vakil \cite{HMSV} to study the moduli space $M_{\vec{r}}$ of weighted $\vec{r}-$weighted ordered point arrangements on $\mathbb{P}^1$, for $\vec{r} \in \Z_{\geq 0}^n.$  The appearance of trees $\tree$ in the context of degenerations of $M_{\vec{r}}$, and Sturmfels and Xu's work on $\mathcal{M}_{\mathbb{P}^1, \vec{p}}(SL_2(\C))$ led Millson to conjecture the following, \cite{Mil}.

\begin{conjecture}[Millson]
For a curve $C$ of genus $0$ and a trivalent tree $\tree$ with $n$-leaves, consider the degeneration of the ring $\mathfrak{A}_{\vec{r}}$ induced by a  constructed by Speyer and Sturmfels associated to the tree $\tree.$  Then the induced degeneration on
$R_{C, \vec{p}}(\vec{r}, L)$ is isomorphic to $\C[P_{\tree}(\vec{r}, L)].$  
\end{conjecture}

\noindent
The following is a consequence of Theorem \ref{G0}.

\begin{corollary}
For a curve $C$ of genus $0$ and a trivalent tree $\tree$ with $n$-leaves, the induced degeneration on
$R_{C, \vec{p}}(\vec{r}, L)$ above is isomorphic to $\C[P_{\tree}(\vec{r}, L)]$ when $(C, \vec{p})$ is the stable curve of type $\tree.$
\end{corollary}

\subsection{Phylogenetics and conformal blocks} 

Along with their relationship to $SL_2(\C)$ conformal blocks, the affine semigroup algebras $\C[P_{\Gamma}]$ make an appearance
in mathematical biology.  The scheme $Proj(\C[P_{\Gamma}])$ is a statistical model based on the Jukes-Cantor binary model of phylogenetics, see \cite{BW}, \cite{Bu}, \cite{BBKM}, \cite{StXu}.  In \cite{Bu} and \cite{BW}, Buczy\'{n}ska and Wi\'{s}niewski show geometrically that the Hilbert functions of the algebras $\C[P_{\Gamma}]$ only depend on the number of leaves and first Betti number of $\Gamma$.   The visually appealing method employed by Buczy\'{n}ska and Wi\'{s}niewski to obtain this result is to construct pair-wise deformations between algebras associated to combinatorial alterations of the underyling graphs (see Figure \ref{fig:3}). In a sense, Theorem \ref{degensl2} ``fills in'' this picture over $\bar{\mathcal{M}}_{g, n}$. 

\begin{figure}[htbp]
\centering
\includegraphics[scale = 0.35]{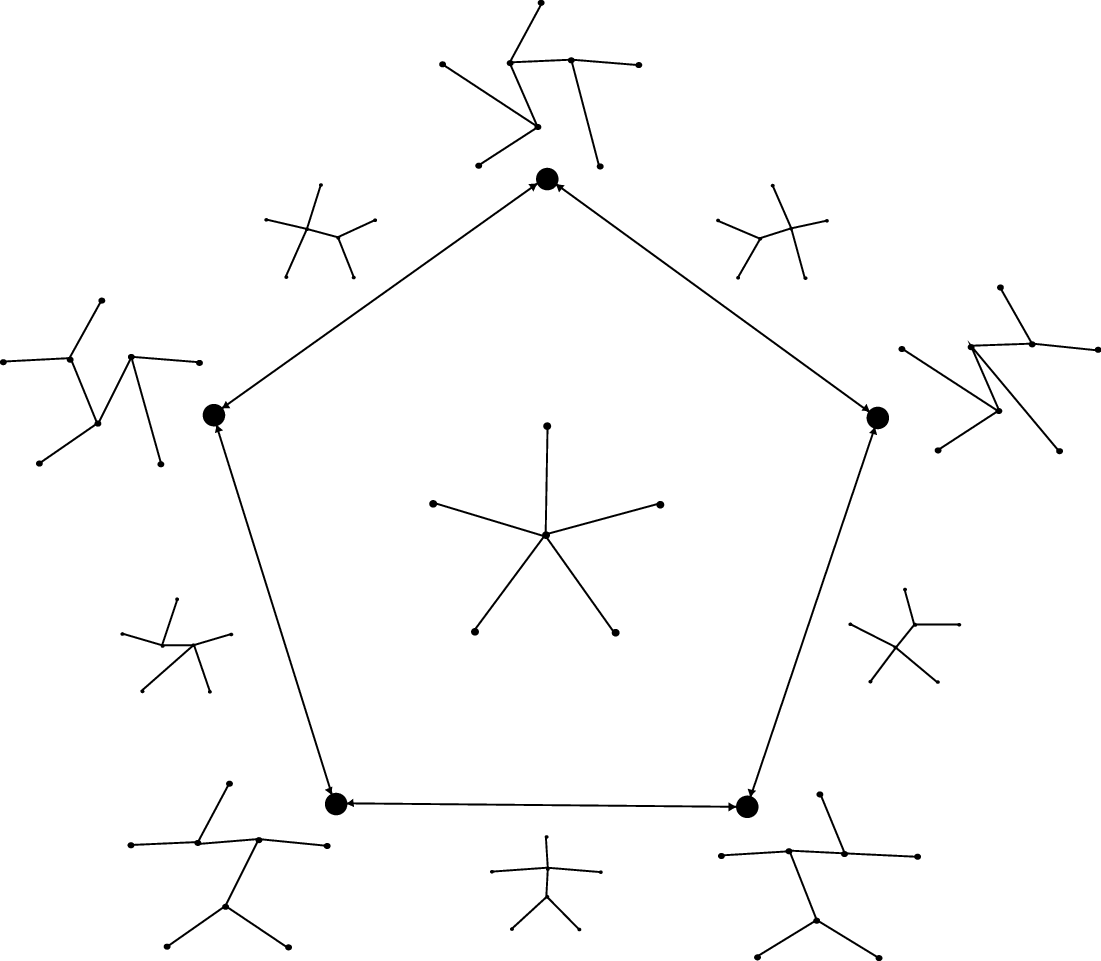}
\caption{Combinatorial alterations between trees with $5$ ordered leaves}
\label{fig:3}
\end{figure}

The commutative algebra of $\C[P_{\Gamma}]$ is also studied by Buczy\'{n}ska, Buczy\'{n}ski, Kubjas, and Micha\l{}ek in \cite{BBKM}, where they prove the following theorem. 

\begin{theorem}[BBKM]
Let $\Gamma$ be a graph with first Betti number $g$ and $n$ leaves, then $\C[P_{\Gamma}]$ is generated
in degree $\leq g+1.$ There exist graphs where this bound is obtained. 
\end{theorem} 

 Although this was not the focus of their work, note that the result in \cite{BBKM} and Theorem \ref{degensl2} imply that $Cox(\mathcal{M}_{C, \vec{p}}(SL_2(\C)))$ is generated by conformal blocks of level $\leq g+1$ for $C, \vec{p}$ generic, and Theorem \ref{sl2gen} shows that this bound can be lowered to $2$. The \cite{BBKM} result also shows that the special fibers $\mathcal{V}_{C, \vec{p}}^{\dagger}(SL_2(\C))$ for $C, \vec{p}$ of trivalent graph type $\Gamma$ can all be generated in degree $\leq g+1.$

 The connection between phylogenetics and moduli of $SL_2(\C)$ principal bundles was first made in the $g = 0$ case by in \cite{StXu}.  Sturmfels and Xu show that the binomial ideal defining $\C[P_{\tree}]$ for $\tree$ a tree coincides with an initial ideal of an ideal presenting the algebra $Cox(\mathcal{M}_{C, \vec{p}}(SL_2(\C)))$.  We note that this connection can also be made directly through the combinatorics of $sl_2(\C)$ conformal blocks. In particular, the quantum Clebsch Gordon rule (see Section \ref{sl2}) and the factorization rules (see Section \ref{filter}), establish directly that the lattice points of the polytope $P_{\Gamma}(\vec{r}, L)$ count the dimension of the space $\mathcal{V}_{C, \vec{p}}^{\dagger}(\vec{r}, L)$ when the genus of $C$ is $\beta_1(\Gamma)$ and $|\vec{p}|$ is the number of leaves of $\Gamma$.

\subsection{Remarks on integrable systems in the moduli of principal bundles}

The polytopes $P_{\Gamma}$ and $P_{\Gamma}(\vec{r}, L)$ appear in work of Hurtubise, Jeffrey \cite{HJ} and Jeffrey, Weitsman \cite{JW}, on the integrable systems in the moduli of bundles associated to the Goldman flows on those spaces. In particular, in \cite{HJ}, Hurtubise and Jeffrey note that the presence of a dense, open integrable system in the moduli space with momentum image $P_{\Gamma}$ almost gives a proof of the Verlinde formula.  They reason that if the moduli space were toric, then the Verlinde formula could be computed by counting the lattice points in $P_{\Gamma}$, which coincides with the expected dimension.  Theorem \ref{degensl2} enhances this picture by showing that the toric variety associated to $P_{\Gamma}$ is a flat degeneration of the moduli space.  It would be interesting to relate the degeneration constructed in Theorem \ref{degensl2} and the integrable system studied in \cite{HJ} along the lines 
of recent work of Kaveh and Harada \cite{HK} (see also Subsection \ref{NOKbody}).

\subsection{Notation}
Here we collect some frequently used notation. \\

\begin{tabular}{l r}
$G$ & A simple, simply connected affine group over $\C.$\\
$\mathfrak{g}$ & The Lie algebra of $G$.\\
$\lambda \in \Delta$ & A dominant weight of $\mathfrak{g}$ in a Weyl chamber.\\ 
$V(\lambda)$ & The irreducible representation of $\mathfrak{g}$ associated to $\lambda.$\\
$\hat{\mathfrak{g}}$ & The affine Kac-Moody algebra of $\mathfrak{g}.$\\
$\mathcal{H}(\lambda, L)$ & The integrable highest weight representation of $\hat{\mathfrak{g}}$ associated to $\lambda, L.$\\
$(C, \vec{p})$ & A curve with marked points.\\
$\mathcal{V}_{C, \vec{p}}^{\dagger}(\vec{\lambda}, L)$ & The space of conformal blocks associated to the data $C, \vec{p}, \vec{\lambda}, L.$\\
$\mathcal{V}_{C, \vec{p}}^{\dagger}(G)$ & The algebra of conformal blocks on $[C, \vec{p}]$ for the group $G.$\\
$\mathcal{M}_{C, \vec{p}}(G)$ & The moduli stack of quasi-parabolic principal $G-$bundles on $[C, \vec{p}]$.\\
$M_{C, \vec{p}}(\vec{\lambda}, L)$ & The moduli space of semistable parabolic $G-$bundles on $[C, \vec{p}]$.\\
$\bar{\mathcal{M}}_{g, n}$ &The moduli stack of stable curves of genus $g$ with $n$ marked points.\\
$\Gamma$ & A graph.\\
$E(\Gamma)$ & The edges of $\Gamma$.\\
$V(\Gamma)$ & The non-leaf vertices of $\Gamma$.\\
\end{tabular}

\bigskip

\section{The sheaf of conformal blocks}\label{sheaf}

In this section we construct the multiplication operation on the sheaf of conformal blocks, and show that its specialization
at smooth marked curve $(C, \vec{p})$ is equal to multiplication of global sections of line bundles on the moduli $\mathcal{M}_{C, \vec{p}}(G)$.
We thank Eduard Looijenga for the remarks he provided on his construction of the sheaf $\mathcal{V}^{\dagger}(\vec{\lambda}, L)$ of conformal blocks on $\bar{\mathcal{M}}_{g, n}$. For a simple Lie algebra $\mathfrak{g}$ we fix a Cartan subalgebra, a system of positive roots, and we let $\Delta$ denote the corresponding Weyl chamber.

\subsection{Construction of the sheaf of conformal blocks}

We refer the reader to the accounts of this construction in \cite{B}, \cite{K}, \cite{KNR}, \cite{L}, \cite{SU}, \cite{TUY} and \cite{Fak}. Let $\theta$ be the longest root, with associated Cartan element $\theta^{\vee}$; the level $L$ alcove $\Delta_L \subset \Delta$ is the simplex defined by the equation $\lambda(\theta^{\vee}) \leq L$.   For each dominant weight $\lambda \in \Delta_L$ there is an integrable highest weight module $\mathcal{H}(\lambda, L)$ of the affine Kac-Moody algebra $\hat{\mathfrak{g}}$ (see \cite[1.5]{B}). Recall that $\hat{\mathfrak{g}}$ contains a subalgebra naturally isomorphic to $\mathfrak{g}.$  The module $\mathcal{H}(\lambda, L)$ likewise contains a $\mathfrak{g}$-highest weight vector $v_{\lambda}$ which generates the $\mathfrak{g}$ irreducible representation $V(\lambda)$. 

The Kac-Moody algebra $\hat{\mathfrak{g}}$ is a central extension of $\mathfrak{g}\otimes \C((t))$, where the additional central element acts on $\mathcal{H}(\lambda, L)$ with weight $L$.  Let  $\hat{\mathfrak{g}}_n$ be the Lie algebra $\sum_{i = 1}^n \hat{\mathfrak{g}}$ with central elements identified, this algebra acts on tensor products of integrable highest weight modules with a common level: $\mathcal{H}(\vec{\lambda}, L) = \mathcal{H}(\lambda_1, L) \otimes \ldots \otimes \mathcal{H}(\lambda_n, L)$.   For a connected, stable curve $(C, \vec{p})$, there is an associated Lie algebra $\hat{\mathfrak{g}}[C, \vec{p}] = \mathfrak{g}\otimes \C[C \setminus \{\vec{p}\}]$. By fixing a local parameter $t_i$ at each marked point $p_i$ we obtain a map $\hat{\mathfrak{g}}[C, \vec{p}] \to \bigoplus_{i = 1}^n \mathfrak{g} \otimes \C((t_i))$ by power-series expansion. By the Residue Theorem (see \cite[Part I]{B}), this map extends to the central extension $\hat{\mathfrak{g}}_n$, so $\mathcal{H}(\vec{\lambda}, L)$ can be considered as a representation of $\hat{\mathfrak{g}}[C, \vec{p}]$.

Now we sheafify this construction following \cite{L} and \cite{Fak}. Let $S$ be a smooth affine variety over $\C.$ By a stable curve of genus $g$ over $S$ with $n$ marked points we mean a proper, flat map $\pi:C \to S$ with fibers equal to genus $g$ curves with at worst double point singularities, and $n$ pairwise non-intersecting sections $p_1, \ldots, p_n: S \to C$ with images contained in the smooth locus, such that $C \setminus \cup p_i(S)$ is affine over $S.$  We assume we have specified isomorphisms $\psi_i: \hat{\mathcal{O}}_{C, p_i(S)} \to H^0(S, \mathcal{O}_S)[[t]]$.

  We require the following sheaves of Lie algebras over $\mathcal{O}_S$:  $\hat{\mathfrak{g}}_n(S) = \hat{\mathfrak{g}}_n \otimes \mathcal{O}_S$,  $\mathfrak{g}(S) = \mathfrak{g} \otimes \mathcal{O}_S$, and $\hat{\mathfrak{g}}(C, \vec{p}) = \mathfrak{g} \otimes \pi_*[\mathcal{O}_{C \setminus \cup p_i(S)})].$  The algebra $\mathfrak{g}(S)$ can be realized as a sub Lie-algebra of $\hat{\mathfrak{g}}(C, \vec{p}),$ and the fiber of $\hat{\mathfrak{g}}(C, \vec{p})$ at $s$ is equal to $\hat{\mathfrak{g}}[\pi^{-1}(s), \vec{p}(s)].$  Furthermore, by using the $\psi_i$, we realize $\hat{\mathfrak{g}}(C, \vec{p})$ as a sub Lie algebra of $\hat{\mathfrak{g}}_n(S)$.  We also require the sheafified highest weight representations: $\mathcal{H}_S(\vec{\lambda}, L) =$ $\mathcal{H}(\vec{\lambda}, L) \otimes \mathcal{O}_S$, $V_S(\vec{\lambda}) = V(\vec{\lambda}) \otimes \mathcal{O}_S.$  The sheaf $\mathcal{H}_S(\vec{\lambda}, L)$ is a $\hat{\mathfrak{g}}_n(S)$ module, and therefore also a $\hat{\mathfrak{g}}(C, \vec{p})$ module, and the sheaf $V_S(\vec{\lambda})$ is likewise a  $\mathfrak{g}(S)$ submodule of $\mathcal{H}_S(\vec{\lambda}, L).$ 

For any Lie algebra $\mathfrak{g}$ over $\C$ and representation $M$ there is a space invariants:

\begin{equation}Hom_{\C}(M / \mathfrak{g} M, \C) \cong
(M^*)^{\mathfrak{g}}.\end{equation}

\noindent
The same construction may be applied to sheaves of Lie algebras and representations
over a scheme $S,$ with $Hom_{\mathcal{O}_S}(-,-),$ the sheaf
of morphisms, and $\mathcal{O}_S$ as a dualizing object. 

\begin{definition}
The sheaf of vacua or conformal blocks, $\mathcal{V}_{C, \vec{p}}^{\dagger}(\vec{\lambda}, L)$ is defined to be the following sheaf of invariants. 

\begin{equation}\mathcal{V}_{C, \vec{p}}^{\dagger}(\vec{\lambda}, L) = Hom_{\mathcal{O}_S}(\mathcal{H}_S(\vec{\lambda}, L)/\hat{\mathfrak{g}}(C, \vec{p}) \mathcal{H}_S(\vec{\lambda}, L), \mathcal{O}_S)\end{equation}
\end{definition}
\noindent

Note that $\mathcal{V}_{C, \vec{p}}^{\dagger}(\vec{\lambda}, L)$ is naturally a subsheaf of the dual $\mathcal{H}_S(\vec{\lambda}, L)^*.$ Taking a single fiber $\pi^{-1}(s)$ of $\pi$ we have the vector space of conformal blocks,

\begin{equation}\mathcal{V}_{\pi^{-1}(s), \vec{p}(s)}^{\dagger}(\vec{\lambda}, L) = Hom_{\C}(\mathcal{H}(\vec{\lambda}, L) /
\hat{\mathfrak{g}}[\pi^{-1}(s), \vec{p}(s)] \mathcal{H}(\vec{\lambda}, L), \C).\end{equation}

\noindent

Over a general smooth base scheme one can always choose the isomorphisms $\psi_i$ Zariski locally.  Furthermore, a description of these sheaves can be given which does not depend on the choice of the $\psi_i$, see \cite{L} and \cite[1.7]{B}.  The corresponding sheaves $\mathcal{V}^{\dagger}(\vec{\lambda}, L)$ on the moduli stack $\bar{\mathcal{M}}_{g, n}$  are proved to be locally free and coherant in \cite{U} and \cite{TUY}, see also \cite{L}.

\subsection{Multiplication of conformal blocks}\label{D2}

Now we define the multiplication operation on sheaves of conformal blocks.  We let $C_{\lambda ; \gamma}: V(\lambda + \gamma) \to V(\lambda)\otimes V(\gamma)$ be the $\mathfrak{g}$  intertwiner which sends the highest weight vector $v_{\lambda+\gamma}$ to $v_{\lambda}\otimes v_{\gamma}.$  This operation also makes sense on integrable $\hat{\mathfrak{g}}$ representations, so by abuse of notation $C_{\lambda ; \gamma}: \mathcal{H}(\lambda + \gamma, L + K) \to \mathcal{H}(\lambda, L)\otimes \mathcal{H}(\gamma, K)$ is the map defined in the same way (see \cite[1.6]{K}).  Notice that these maps coincide under the restriction to the subspace $V(\lambda + \gamma) \subset \mathcal{H}(\lambda + \gamma, L + K).$   Furthermore, the inclusions $i: V(\lambda) \to \mathcal{H}(\lambda, L)$ and $\mathfrak{g} \subset \hat{\mathfrak{g}}[C, \vec{p}]$ induce the correlation maps $F: \mathcal{V}_{C, \vec{p}}^{\dagger}(\vec{\lambda}, L) \to [V(\vec{\lambda})]^{\mathfrak{g}}$ We let $\bar{I}$ denote the natural inclusion $\bar{I}: \mathcal{V}_{C, \vec{p}}^{\dagger}(\vec{\lambda}, L) \to \mathcal{H}_S(\vec{\lambda}, L)^*$, and $I$ denote the inclusion $I: [V_S(\vec{\lambda})^*]^{\mathfrak{g}(S)} \to V_S(\vec{\lambda})^*$. As a consequence of these definitions the following diagram commutes:

$$
\begin{CD}
\mathcal{V}^{\dagger}_{C, \vec{p}}(\vec{\lambda}, L) @>\bar{I}>> \mathcal{H}_S(\vec{\lambda}, L)^*\\
@A F AA @A\vec{i}^*AA\\
[V_S(\vec{\lambda})^*]^{\mathfrak{g}(S)} @>I>> V_S(\vec{\lambda})^*.\\
\end{CD}
$$

\noindent
The map $C_{\vec{\lambda} ; \vec{\gamma}}^*: \mathcal{H}_S(\vec{\lambda}, L)^* \otimes_{\mathcal{O}_S} \mathcal{H}_S(\vec{\gamma}, K)^* \to \mathcal{H}_S(\vec{\lambda} + \vec{\gamma}, K + L)^*$ formed by dualizing $C_{\vec{\lambda} ; \vec{\gamma}}$  is the global section multiplication operation on the projective coordinate rings of Kac-Moody flag varieties by the Kac-Moody version of the Borel-Bott-Weil theorem of Kumar, \cite{K}.  As a consequence this map defines a commutative and associative multiplication on the direct sum of the $\mathcal{H}_S(\vec{\lambda}, L)^*$, and restricts to such an operation on the sheaves of vacua.  We define $C_{\vec{\lambda} ; \vec{\gamma}}^*: [V_S(\vec{\lambda})]^{\mathfrak{g}(S)} \otimes [V_S(\vec{\gamma})]^{\mathfrak{g}(S)}$ $\to [V_S(\vec{\lambda} + \vec{\gamma})]^{\mathfrak{g}(S)}$ using the same recipe, the following lemma is then immediate.

\begin{lemma}\label{mainsquare}
The multiplication maps $C_{\vec{\lambda} ; \vec{\gamma}}^*$ commute with the correlation map $F$. 
\end{lemma}

 When the genus $g = 0$, the map $F$ is a monomorphism by an observation of Tsuchiya, Ueno and Yamada in \cite{TUY}.  Everything here commutes with specialization, so we also obtain the following diagram: 

$$
\begin{CD}
 \mathcal{V}_{\pi^{-1}(s), \vec{p}(s)}^{\dagger}(\vec{\lambda}, L) \otimes \mathcal{V}_{\pi^{-1}(s), \vec{p}(s)}^{\dagger}(\vec{\gamma}, K) @>C_{\vec{\lambda} ; \vec{\gamma}}^*>>\mathcal{V}_{\pi^{-1}(s), \vec{p}(s)}^{\dagger}(\vec{\lambda} + \vec{\gamma}, L + K)\\
@V F \otimes F VV @V F VV \\
[V(\vec{\lambda})^*]^{\mathfrak{g}} \otimes [V(\vec{\gamma})^*]^{\mathfrak{g}} @>C_{\vec{\lambda} ; \vec{\gamma}}^*>> [V(\vec{\gamma} + \vec{\lambda})^*]^{\mathfrak{g}}.\\
\end{CD}
$$\\

This multiplication operation also works well with the alternative constructions of the spaces of conformal blocks given in \cite{B}.  We fix an $n+1$ marked curve $(C, \vec{p}, q).$  By identifying highest weight vectors, we get the following diagram of $\mathfrak{g}$ representations.

$$
\begin{CD}
V(0) \otimes \mathcal{H}(\vec{\lambda},L) @>>> \mathcal{H}(0, L) \otimes \mathcal{H}(\vec{\lambda}, L) @<<< \mathcal{H}(0, L) \otimes V(\vec{\lambda})\\
\end{CD}
$$
The space on the left is a $\hat{\mathfrak{g}}[C, \vec{p}]$ 
representation, the middle is a $\hat{\mathfrak{g}}[C, \vec{p}, q]$ 
representation, and the space on the right is a $\hat{\mathfrak{g}}[C,  q]$ representation where
the action on $V(\lambda_i)$ is by evaluation at $p_i$. The following is a ringification of a result which appears in \cite{B}.

\begin{proposition}\label{reform}
Let $C$ be a stable curve. 
The following are isomorphisms of algebras over $\C.$
$$
\begin{CD}
\bigoplus_{\vec{\lambda}, L} \mathcal{V}_{C, \vec{p}}^{\dagger}(\vec{\lambda}, L) @>>> \bigoplus_{\vec{\lambda}, L} \mathcal{V}_{C, \vec{p}, q}^{\dagger}(0, \vec{\lambda}, L) @<<< \bigoplus_{\vec{\lambda}, L} [\mathcal{H}(0, L) \otimes V(\vec{\lambda})]^{\mathfrak{g}[C,  q]}\\
\end{CD}
$$
\end{proposition}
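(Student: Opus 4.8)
The plan is to treat each arrow separately: first as an isomorphism of the individual graded pieces — where it is exactly a theorem of Beauville \cite{B} (incorporating propagation of vacua from \cite{TUY}) — and then to promote both to isomorphisms of algebras by checking compatibility with the multiplication built in the previous subsection. The organizing observation is that every multiplication in sight is induced by the same recipe, namely by identifying the chosen highest weight vectors $v_{\vec{\lambda} + \vec{\gamma}} \mapsto v_{\vec{\lambda}}\otimes v_{\vec{\gamma}}$ through the comultiplications $C_{\vec{\lambda} + \vec{\gamma}}$, so that the three algebras carry, in effect, a single multiplication realized in three different families of modules.

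First I would dispose of the left arrow. The object $V(0)\otimes\mathcal{H}(\vec{\lambda}, L)$ computing $\mathcal{V}_{C, \vec{p}}^+(\vec{\lambda}, L)$ differs from the object $\mathcal{H}(0,L)\otimes\mathcal{H}(\vec{\lambda}, L)$ computing $\mathcal{V}_{C, \vec{p}, q}^+(0,\vec{\lambda}, L)$ only in replacing the trivial finite-dimensional module at $q$ by the level-$L$ vacuum module; propagation of vacua yields a canonical isomorphism of the resulting spaces of coinvariants for each $(\vec{\lambda}, L)$. To see that the induced map of direct sums is multiplicative, note that on the $q$-slot the relevant comultiplication is $C_0 : \mathcal{H}(0, L+K)\to \mathcal{H}(0,L)\otimes\mathcal{H}(0,K)$, which sends the vacuum $v_0$ to $v_0 \otimes v_0$. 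Since propagation of vacua is implemented precisely by the vacuum vector, the isomorphisms at levels $L$, $K$ and $L+K$ fit into a commuting square of exactly the shape of Lemma \ref{mainsquare}, and dualizing (the dualizing lemma above) shows the left arrow respects multiplication.

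The right arrow is the heart of the matter. Beauville's theorem provides, for each $(\vec{\lambda}, L)$, an isomorphism $\beta_{\vec{\lambda}, L}: W(\vec{\lambda}, L) \to \mathcal{V}_{C, \vec{p}, q}^+(0, \vec{\lambda}, L)$, where $W(\vec{\lambda}, L) = V^+_{\mathfrak{g}\otimes\C(C\setminus q)}[\mathcal{H}(0,L)\otimes V(\vec{\lambda})]$, induced by the inclusion $i: \mathcal{H}(0,L)\otimes V(\vec{\lambda})\hookrightarrow \mathcal{H}(0,L)\otimes\mathcal{H}(\vec{\lambda}, L)$ together with the passage from the one-puncture current algebra $\mathfrak{g}\otimes\C(C\setminus q)$ acting by evaluation at $\vec{p}$ to the full current algebra. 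This is exactly the content of the three-term diagram of $\mathfrak{g}$-representations displayed before the proposition. To make $\beta = \bigoplus \beta_{\vec{\lambda}, L}$ a ring map I would assemble the square
$$
\begin{CD}
W(\vec{\lambda} + \vec{\gamma}, L+K) @<<< W(\vec{\lambda}, L)\otimes W(\vec{\gamma}, K)\\
@V{\beta}VV @V{\beta\otimes\beta}VV\\
\mathcal{V}_{C, \vec{p}, q}^+(0, \vec{\lambda} + \vec{\gamma}, L+K) @<<< \mathcal{V}_{C, \vec{p}, q}^+(0, \vec{\lambda}, L)\otimes \mathcal{V}_{C, \vec{p}, q}^+(0, \vec{\gamma}, K)\\
\end{CD}
$$
in which the horizontal maps are the duals of the comultiplications $C_{\vec{\lambda}+\vec{\gamma}}$. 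Its commutativity reduces, after dualizing as in Lemma \ref{mainsquare}, to the assertion that $i\otimes i$ intertwines the comultiplication on the $V$'s with that on the $\mathcal{H}$'s — which holds by construction, since both are defined by the same highest weight vectors — together with the factor $C_0$ on the vacuum slots.

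The hard part will be the level-changing compatibility hidden in the right arrow: the realization $W(\vec{\lambda}, L)$ uses the integrable vacuum module $\mathcal{H}(0,L)$, which depends on $L$, whereas the finite-dimensional factor $V(\vec{\lambda})$ is level-independent and carries the evaluation action. I must verify that Beauville's isomorphism is functorial in $L$ in a manner compatible with $C_0 : \mathcal{H}(0, L+K)\to \mathcal{H}(0,L)\otimes\mathcal{H}(0,K)$ — that is, that the residue and propagation argument underlying $\beta$ commutes with raising the level, rather than merely being an isomorphism in each fixed degree. Once this single compatibility is established, the remaining steps are diagram pasting and dualization identical in flavor to the proof of Lemma \ref{mainsquare}, and the two arrows together exhibit the three direct sums as isomorphic $\C$-algebras.
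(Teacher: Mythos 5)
Your proposal is correct and follows essentially the same route as the paper: Beauville's theorem \cite{B} and propagation of vacua give the vector-space isomorphisms on graded pieces, and multiplicativity follows because every map in sight (both horizontal arrows and the comultiplications $C_{\vec{\lambda}+\vec{\gamma}}$) is defined by identifying the chosen highest weight vectors and then dualizing, with Lie algebra invariants forming subspaces preserved by multiplication. The ``level-changing compatibility'' you flag as the hard part is in fact automatic from the same principle: the modules $\mathcal{H}(0,L)$ and $\mathcal{H}(\vec{\lambda}+\vec{\gamma},L+K)$ are generated by their highest weight vectors, so any two equivariant maps agreeing on those vectors coincide, and the relevant squares commute by uniqueness rather than by any further residue or propagation argument.
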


\begin{proof}
By a theorem in \cite{B} the morphism on the right is an isomorphism of vector spaces, and by $vacuum$ $propagation$ (see \cite{TUY}, \cite{SU}, \cite{B} and \cite{NT}) the morphism on the left is also an isomorphism of vector spaces.  Both maps are defined by identifying highest weight vectors, then dualizing, this gives a diagram of rings, with graded components,

$$
\begin{CD}
[V(0) \otimes \mathcal{H}(\vec{\lambda})]^* @<<< [\mathcal{H}(0, L) \otimes \mathcal{H}(\vec{\lambda}, L)]^* @>>> [\mathcal{H}(0, L) \otimes V(\vec{\lambda})]^*\\
\end{CD}
$$
Taking Lie algebra invariants picks out subspaces of these spaces which are preserved by multiplication, these are isomorphisms of algebras.
\end{proof}

\noindent
Beauville's result gives the identification $\mathcal{V}^{\dagger}_{C, \vec{p}}(\vec{\lambda}, L) = [\mathcal{H}(0, L)^* \otimes V(\vec{\lambda})^*]^{\hat{\mathfrak{g}}[C , q]},$ see also \cite{LS}.

\subsection{Moduli of quasi-parabolic principal bundles}

For what follows we refer the reader to the work of 
Kumar, Kumar-Narasimhan-Ramanathan, Laszlo-Sorger, and Pauly 
(\cite{K}, \cite{KNR}, \cite{LS}, \cite{S}, \cite{P}). 
The theorem below can be found in \cite{KNR}, \cite{LS} and in \cite{S} for the exceptional groups.

\begin{theorem}
The moduli stack of quasi-parabolic $G$-bundles on $C$ smooth, with parabolic structure $\vec{\Lambda}$ at the marked points $\mathcal{M}_{C, \vec{p}}(\vec{\Lambda})$ carries a line bundle $\mathcal{L}(\vec{\lambda}, L)$, where $\lambda_i$ a dominant weight in the face of $\Delta$ associated to $\Lambda_i.$  Global sections of this line bundle are are identified with a space of conformal blocks:

\begin{equation}H^0(\mathcal{M}_{C, \vec{p}} (\vec{\Lambda}), \mathcal{L}(\vec{\lambda}, L)) \cong [\mathcal{H}(0, L)^*\otimes V(\vec{\lambda^*})]^{\mathfrak{g}[C, q]}.\end{equation}
\end{theorem}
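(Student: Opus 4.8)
The plan is to use the uniformization theorem for principal $G$-bundles, which realizes the moduli stack as a double quotient of the loop group and thereby converts the cohomology computation into a statement in the representation theory of $\hat{\mathfrak{g}}$. Fix the auxiliary point $q \in C$ appearing on the right-hand side, and let $\mathcal{K}_q = \C((t))$ and $\mathcal{O}_q = \C[[t]]$ be the local field and ring at $q$. The affine Grassmannian $\mathcal{Q}_G = G(\mathcal{K}_q)/G(\mathcal{O}_q)$, together with the uniformization theorem in the form used by \cite{KNR} and \cite{LS}, identifies the stack of $G$-bundles trivialized away from $q$ with the quotient of $\mathcal{Q}_G$ by the group $G_{\mathrm{out}} = \mathrm{Mor}(C \setminus q, G)$, whose Lie algebra is exactly the $\mathfrak{g}(C\setminus q) = \mathfrak{g}\otimes \C[C\setminus q]$ of the statement. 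The quasiparabolic data at $\vec{p}$ is a reduction to parabolics $P_i$ at each $p_i$, so $\mathcal{M}_{C, \vec{p}}(\vec{\Lambda})$ is obtained from this quotient by the finite-dimensional flag factors $\prod_i G/P_i$, with the type $\Lambda_i$ being precisely the condition that $\lambda_i$ sit in the corresponding face of the Weyl chamber $\Delta$.

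Next I would identify the line bundle. The Picard group of $\mathcal{Q}_G$ is generated by the determinant (theta) bundle, which carries the level-one central extension; its $L$-th power has central charge $L$, matching the level fixed in the definition of $\mathcal{H}(0, L)$, and this is exactly where the normalization of the Killing form in the bracket of $\hat{\mathfrak{g}}$ enters. One checks that the pullback of $\mathcal{L}(\vec{\lambda}, L)$ along $\mathcal{Q}_G \times \prod_i G/P_i \to \mathcal{M}_{C, \vec{p}}(\vec{\Lambda})$ is this $L$-th power twisted by the characters $\lambda_i$ at the parabolic points.

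Then I would apply Borel--Weil in two flavors. For the finite flag factors, $H^0(G/P_i, \mathcal{L}_{\lambda_i}) \cong V(\lambda_i)^* = V(\lambda_i^*)$, which is where the dual weights $\vec{\lambda^*}$ appear; for the affine direction, the global sections of the determinant bundle to the power $L$ on the ind-scheme $\mathcal{Q}_G$ form the full linear dual $\mathcal{H}(0, L)^*$ of the integrable level-$L$ vacuum module. Taking the product and descending to the quotient stack, global sections are the $G_{\mathrm{out}}$-invariant sections upstairs, which infinitesimally are the $\mathfrak{g}(C\setminus q)$-invariants, yielding $[\mathcal{H}(0, L)^* \otimes V(\vec{\lambda^*})]^{\mathfrak{g}(C\setminus q)}$ as claimed.

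The main obstacle is the descent step: proving that $H^0$ of the line bundle on the quotient stack genuinely \emph{equals} the $G_{\mathrm{out}}$-invariants of $H^0$ on $\mathcal{Q}_G$, rather than merely injecting into them. This requires that the uniformization map be a locally trivial fibration in a suitable topology, so that invariant sections descend to sections of the quotient bundle, and that no obstruction arises from the higher cohomology of the $G_{\mathrm{out}}$-action; this is the technical core of \cite{KNR} and \cite{LS}, resting on the ind-properness of $\mathcal{Q}_G$ and a careful analysis of the $G_{\mathrm{out}}$-orbits. A secondary, but routine, point is verifying that the face of $\Delta$ cut out by $\Lambda_i$ matches the weights $\lambda_i$ admissible at level $L$, which is a finite-dimensional representation-theoretic check rather than a genuine difficulty.
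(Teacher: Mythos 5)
Your proposal is correct and takes essentially the same route as the paper: the paper states this theorem as a known result of \cite{KNR} and \cite{LS}, and in the surrounding text sketches exactly your argument --- the realization of $\mathcal{M}_{C,\vec{p}}(\vec{\Lambda})$ as a quotient of $Q \times G/\Lambda_1 \times \ldots \times G/\Lambda_n$ by the ind-group $\Gamma$ of maps $C\setminus q \to G$, the identification of sections upstairs with $\mathcal{H}(0,L)^* \otimes V(\vec{\lambda}^*)$ via Kumar's generalized Borel--Weil--Bott theorem, and the identification of sections on the quotient stack with $\Gamma$-equivariant (hence $\mathfrak{g}(C\setminus q)$-invariant) sections upstairs. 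The descent step you single out as the technical core is precisely the part the paper, like you, defers to \cite{KNR} and \cite{LS}.
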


The stack $\mathcal{M}_{C, \vec{p}}(\vec{\Lambda})$ is obtained
as a quotient of the $ind$-variety 
$Q \times G/\Lambda_1 \times \ldots \times G/\Lambda_n,$ by 
the $ind$-group $G(\C[C \setminus q])$ for $q \in C$, where $Q$ is the affine Grassmannian variety.  This  space is constructed as a quotient $Q = L(G) / L^{+}(G)$, where $L(G)$ is the loop group of $G.$   Let $\hat{\mathcal{O}}_q$
be the formal completion of the local ring at $q,$ and let $\mathfrak{k}_q$ be
the quotient field of $\hat{\mathcal{O}}_q.$  Then $L(G) = G(\mathfrak{k}_q),$ and $L^+(G) = G(\hat{\mathcal{O}}_q).$    The space $Q \times G/ \Lambda_1 \times \ldots \times G/ \Lambda_n$ carries line bundles $L(L, \vec{\lambda})$ with global section spaces
equal to $\mathcal{H}(0, L)^* \otimes V(\vec{\lambda}^*).$ By the Borel-Bott-Weil theorem in the Kac-Moody setting proved by Kumar \cite{K}, multiplication of global sections is computed with the maps $C_{\lambda ; \gamma}^*$  from the previous subsection.

\begin{proposition}\label{moduli}
For $(C, \vec{p}) \in \mathcal{M}_{g, n} \subset \bar{\mathcal{M}}_{g, n}$ there is a monomorphism of multigraded rings

\begin{equation}h_{\vec{\Lambda}}: Cox(\mathcal{M}_{C, \vec{p}}(\vec{\Lambda})) \to \mathcal{V}_{C, \vec{p}}^{\dagger}(G).\end{equation}
The image of this monomorphism is the direct sum of conformal blocks
$\mathcal{V}_{C, \vec{p}}^{\dagger}(\vec{\lambda}, L)$ with $\lambda_i$ a dominant weight
in the face of $\Delta$ associated to $\Lambda_i.$  
This is an isomorphism when all $\Lambda_i$ are Borel subgroups. 
\end{proposition}

\begin{proof}
 As we mentioned in the introduction, in \cite{LS} Laszlo and Sorger identify the Picard group of $\mathcal{M}_{C, \vec{p}}(\vec{\Lambda}):$

\begin{equation}Pic(\mathcal{M}_{C, \vec{p}}(\vec{\Lambda})) = \mathcal{X}(\Lambda_1) \times \ldots \times \mathcal{X}(\Lambda_n) \times \Z, \end{equation}

\noindent
where $\mathcal{X}(\Lambda_i)$ is the character group of $\Lambda_i.$   For any line bundle
$\mathcal{L}(\vec{\lambda}, L)$ on $\mathcal{M}_{C, \vec{p}}(\vec{\Lambda})$
there is an isomorphism between the sections of $\mathcal{L}(\vec{\lambda}, L)$
and the $G(\C[C \setminus q])$-equivariant sections of the pullback bundle
on $Q \times G/\Lambda_1 \times \ldots \times G/\Lambda_n$ 
by a standard theorem on quotient stacks, see \cite{LS}. By Borel-Bott-Weil (standard and Kac-Moody versions), it follows that such a line bundle is effective only if each $\lambda_i$ is dominant and $L$ is non-negative.  In this case we have concluded above that the global sections are spaces of conformal blocks and that the multiplication operation on the section spaces is computed by the multiplication operation in $ \mathcal{V}_{C, \vec{p}}^{\dagger}(G)$.
\end{proof}

\section{Filtrations of the algebra of conformal blocks}\label{filter}

In the previous section we built the flat sheaf of algebras $\mathcal{V}^{\dagger}(G).$ 
This allows us to relate $Cox(\mathcal{M}_{C, \vec{p}}(\vec{\Lambda}))$
to the algebra $\mathcal{V}^{\dagger}_{C', \vec{q}}(G)$ for $(C', \vec{q})$ a singular curve.
In this section we use the factorization map of Tsuchiya-Ueno-Yamada to define a degeneration $\mathcal{V}_{C, \vec{p}}^{\dagger}(G) \Rightarrow [\bigotimes_{v \in V(\Gamma)} \mathcal{V}_{0, 3}^{\dagger}(G)]^{T_{\Gamma}}$ for a singular curve $C, \vec{p}$ of type $\Gamma,$ and prove Theorem \ref{main}.   We begin with a discussion of the factorization isomorphism. For each dominant weight $\lambda \in \Delta$ and its dual $\lambda^*$, and a choice of highest weight vector $v_{\lambda} \in V(\lambda)$ and lowest weight vector $\hat{v}_{\lambda^*}$, let $F_{\lambda}: V(\lambda)\otimes V(\lambda^*) \to \C$  be the unique equivariant map such that $F_{\lambda}(v_{\lambda} \otimes \hat{v}_{\lambda^*}) = 1$.  The map $F_{\lambda}$ gives an isomorphism  

\begin{equation}
Hom_{\C}(V(\lambda), V(\lambda)) \cong V(\lambda)\otimes V(\lambda^*),\\
\end{equation} 

\noindent
where $\sum_i x_i\otimes y_i$ acts on $v \in V(\lambda)$ as $\sum_i x_i \otimes F_{\lambda}(y_i \otimes v).$
We choose $O_{\lambda, \lambda^*} \in V(\lambda)\otimes V(\lambda^*)$
to represent the identity under this isomorphism.  This element defines a $\mathfrak{g}-$linear map,

$$
\begin{CD}
V(\vec{\lambda}) @>\rho_{\alpha}>> V(\vec{\lambda})\otimes V(\alpha)\otimes V(\alpha^*),\\
\end{CD}
$$ 

\noindent
 which sends $X$ to $X \otimes O_{\alpha, \alpha^*}.$   The map $\rho_{\alpha}$ also makes sense for integrable highest weight
representations of $\hat{\mathfrak{g}},$ we can define $\rho_{\alpha}: \mathcal{H}(\vec{\lambda}, L) \to \mathcal{H}(\vec{\lambda}, \alpha, \alpha^*, L) = \mathcal{H}(\vec{\lambda}, L) \otimes \mathcal{H}(\alpha, \alpha^*, L)$. 

We fix a stable curve $C$, with singular point $q \in C$, and we let $\tilde{C}$ be the partial normalization of $C$ at $q$ with the two new marked points $q_1, q_2 \in \tilde{C}.$  The modules $\mathcal{H}(\vec{\lambda}, L)$ and $\mathcal{H}(\vec{\lambda}, \alpha, \alpha^*, L)$ are viewed as representations of $\hat{\mathfrak{g}}[C, \vec{p}]$ and $\hat{\mathfrak{g}}[\tilde{C}, \vec{p}, q_1, q_2]$, respectively.  Taking dual spaces, and then invariants by these Lie algebras yields the following map, which is shown to be injective in \cite{TUY}. 

$$
\begin{CD}
\mathcal{V}^{\dagger}_{C, \vec{p}}(\vec{\lambda}, L) @<\hat{\rho}_{\alpha}<<\mathcal{V}^{\dagger}_{\tilde{C}, \vec{p}, q_1, q_2}(\vec{\lambda}, \alpha, \alpha^*, L)\\
\end{CD}
$$

\noindent
This operation can be performed with any finite number of nodal singular points, $\vec{q}.$ Summing over all $\vec{\alpha} \in \Delta_L^m$ gives the factorization isomorphism. As we show below, the extra dominant weight data $\vec{\alpha} \in \Delta_L^m$ brought out by factorization defines a filtration on $\mathcal{V}^{\dagger}_{C, \vec{p}}(G).$

Consider the tensor product decomposition:

\begin{equation}V(\alpha) \otimes V(\beta) \cong \bigoplus W^{\eta}_{\alpha, \beta} \otimes V(\eta),\end{equation}

\noindent
where $W^{\eta}_{\alpha, \beta} = Hom_{\mathfrak{g}}(V(\eta), V(\alpha)\otimes V(\beta)).$ We have the following identities:  

\begin{equation}
V(\alpha)\otimes V(\beta) \otimes V(\alpha^*)\otimes V(\beta^*) \cong\end{equation}
$$Hom_{\C}(V(\alpha)\otimes V(\beta), V(\alpha) \otimes V(\beta))\cong$$
$$Hom_{\C}( \bigoplus W_{\alpha, \beta}^{\eta} \otimes V(\eta), \bigoplus W_{\alpha, \beta}^{\eta} \otimes V(\eta)) 
\cong [\bigoplus W_{\alpha, \beta}^{\eta}\otimes V(\eta)] \otimes [\bigoplus W_{\alpha^*, \beta^*}^{\eta^*} \otimes V(\eta^*)].$$ \\

For any two maps $f \otimes g \in W_{\alpha, \beta}^{\eta} \otimes W_{\alpha^*, \beta^*}^{\eta^*},$ the map $F_{\alpha}\otimes F_{\beta}\circ (f \otimes g) : V(\eta) \otimes V(\eta^*) \to \C$ must be a multiple $F_{\alpha, \beta}^{\eta}(f, g)$ of $F_{\eta}.$ This assignment defines
a bilinear map $F_{\alpha, \beta}^{\eta}: W_{\alpha, \beta}^{\eta} \otimes W_{\alpha^*, \beta^*}^{\eta^*} \to \C.$ We let $I_{\alpha, \beta}^{\eta}$ represent the identity map under the induced isomorphism $W_{\alpha, \beta}^{\eta} \otimes W_{\alpha^*, \beta^*}^{\eta^*} = Hom(W_{\alpha, \beta}^{\eta}, W_{\alpha, \beta}^{\eta}).$  By definition we have $F_{\alpha}\otimes F_{\beta} = \sum F_{\alpha, \beta}^{\eta} \otimes F_{\eta},$ and therefore the following identity. 

\begin{equation}\label{identity}
O_{\alpha, \alpha^*} \otimes O_{\beta, \beta^*} = \sum I_{\alpha, \beta}^{\eta}\otimes  O_{\eta, \eta^*},
\end{equation}

\noindent
We let  $f_{\eta}: W_{\alpha, \beta}^{\eta}\otimes V(\eta) \to V(\alpha) \otimes V(\beta)$ denote the inclusion defined by the direct sum decomposition, with $f_{\eta^*}: W_{\alpha^*, \beta^*}^{\eta^*}\otimes V(\eta^*) \to V(\alpha^*) \otimes V(\beta^*)$ the corresponding maps on the dual representations.  By our choices above, we have $f_{\alpha + \beta} = C_{\alpha ; \beta},$ and it is a straightforward calculation
to verify that $f_{\alpha^* + \beta^*} = C_{\alpha^* ; \beta^*}.$  We recall the $\hat{\mathfrak{g}}$ Verma module $\bar{V}(\lambda, L)$ from \cite[1.6]{B} and \cite[1.5]{K}. The $f_{\eta}$ give maps,

$$
\begin{CD}
W_{\alpha, \beta}^{\eta}\otimes \bar{V}(\eta, K+L) @> \bar{f}_{\eta}>> \mathcal{H}(\alpha, L) \otimes \mathcal{H}(\beta, K),\\
\end{CD}
$$
\noindent
and the identity \ref{identity} above implies that the following diagram commutes:

\begin{equation}\label{D}
\tiny
\begin{CD}
\mathcal{H}(\vec{\lambda}, L) \otimes \mathcal{H}(\vec{\gamma}, K) @>\rho_{\alpha} \otimes \rho_{\beta}>>  [\mathcal{H}(\vec{\lambda}, L)\otimes \mathcal{H}(\alpha, \alpha^*, K)] \otimes [\mathcal{H}(\vec{\gamma}, K) \otimes \mathcal{H}(\beta, \beta^*, L)] \\
@A C_{\vec{\lambda} ; \vec{\gamma}} AA  @A\sum_{\eta} C_{\vec{\lambda} ; \vec{\gamma}} \otimes \bar{f}_{\eta} \otimes \bar{f}_{\eta^*} AA \\
\mathcal{H}(\vec{\lambda} + \vec{\gamma}, K + L) @>\sum_{\eta}(I_{\alpha, \beta}^{\eta} \otimes \rho_{\eta})>>
\bigoplus_{\eta} \mathcal{H}(\vec{\lambda} + \vec{\gamma}, K + L)\otimes W_{\alpha, \beta}^{\eta} \otimes W_{\alpha^*, \beta^*}^{\eta^*} \otimes \bar{V}(\eta, K+L) \otimes \bar{V}(\eta^*, K+L). \\
\end{CD}
\normalsize
\end{equation}

\noindent
Here the sum is over all dominant weights $\eta$ which are smaller than $\alpha + \beta$ in the dominant weight ordering. 
The bottom map in the diagram sends a vector $Y$ to $\sum Y \otimes I_{\alpha, \beta}^{\eta} \otimes O_{\eta, \eta^*},$ so we replace the map $\bar{f}_{\eta} \otimes \bar{f}_{\eta^*}$ with $\phi_{\eta, \eta^*}(X) = \bar{f}_{\eta} \otimes \bar{f}_{\eta^*}(I_{\alpha, \beta}^{\eta} \otimes X).$  For $\eta = \alpha + \beta,$ we have by definition, 

\begin{equation}\bar{f}_{\alpha + \beta} \otimes \bar{f}_{\alpha^* + \beta^*} = \phi_{\alpha + \beta, \alpha^* + \beta^*} = C_{\alpha , \alpha* ; \beta, \beta^*}.\\\end{equation}

\begin{proposition}\label{factor}

The following diagram commutes. 

$$
\begin{CD}
\mathcal{V}_{C, \vec{p}}^{\dagger}(\vec{\lambda}, L) \otimes \mathcal{V}_{C, \vec{p}}^{\dagger}(\vec{\gamma}, K) @<\hat{\rho}_{\alpha}\otimes \hat{\rho}_{\beta}<< \mathcal{V}_{\tilde{C}, \vec{p}, q_1, q_2}^{\dagger}(\vec{\lambda}, \alpha, \alpha^*, L) \otimes \mathcal{V}_{\tilde{C}, \vec{p}, q_1, q_2}^{\dagger}(\vec{\gamma}, \beta, \beta^*, K)\\
@V C_{\vec{\lambda} ; \vec{\gamma}}^* VV @V [\sum_{\eta} C_{\vec{\lambda} ; \vec{\gamma}} \otimes  \phi_{\eta, \eta^*}]^* VV\\
\mathcal{V}_{C, \vec{p}}^{\dagger}(\vec{\lambda} + \vec{\gamma}, L +K) @<\sum_{\eta}   \hat{\rho}_{\eta}<< \bigoplus_{\eta} \mathcal{V}_{\tilde{C}, \vec{p}, q_1, q_2}^{\dagger}(\vec{\lambda} + \vec{\gamma}, \eta, \eta^*, L + K)\\
\end{CD}
$$
\end{proposition}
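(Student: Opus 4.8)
The plan is to prove that the diagram in Proposition \ref{factor} commutes by reducing it to a diagram of representations whose commutativity was essentially established just before the statement, and then applying the \emph{Dualizing} lemma. The commutative square displayed immediately above the proposition, built on the level of the spaces $\mathcal{H}(\vec{\lambda},L)\otimes\mathcal{H}(\vec{\gamma},K)$ and their factorized counterparts via $\rho_\alpha\otimes\rho_\beta$, the fusion maps $C_{\vec{\lambda}+\vec{\gamma}}$, and the coefficients $I_{\alpha,\beta}^\eta$, is the heart of the matter. So the first step is to recognize that both routes around the proposition's square arise by passing to $\hat{\mathfrak{g}}(C)$-coinvariants (equivalently, taking conformal blocks) of the two routes around that representation-level square, and then dualizing.

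First I would recall that the horizontal maps $\hat\rho_\alpha$, $\hat\rho_\beta$, $\sum_\eta\hat\rho_\eta$ are precisely the duals of the $\rho$-type maps described in the factorization construction, defined by $\hat\Phi(Y)=\Phi(Y\otimes O_{\vec\alpha,\vec\alpha^*})$, and that the vertical maps $C^*_{\vec\lambda+\vec\gamma}$ and $[\sum_\eta C_{\vec\lambda+\vec\gamma}\otimes F_{\eta,\eta^*}]^*$ are the duals of the multiplication (fusion) maps. The key identity \ref{identity}, namely $O_{\alpha,\alpha^*}\otimes O_{\beta,\beta^*}=\sum I_{\alpha,\beta}^\eta\otimes O_{\eta,\eta^*}$, is exactly the compatibility that makes the representation-level square commute: going one way one first inserts the vacuum-type elements at the normalized punctures and then fuses, going the other way one fuses first and then inserts, and \ref{identity} says these agree up to the bookkeeping encoded in $f_{\eta,\eta^*}$ and $F_{\eta,\eta^*}$. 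Since the maps $\rho_\alpha$ and $C_{\vec\lambda+\vec\gamma}$ are all defined by identifying chosen highest weight vectors, and since these operations are $\hat{\mathfrak{g}}(C)$-equivariant, the diagram descends to coinvariants, exactly as in the pattern used to prove Lemma \ref{mainsquare}.

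Having the commutative square on coinvariants $V_{\hat{\mathfrak{g}}(C)}[\,\cdot\,]$, I would then invoke the Dualizing lemma verbatim: applying $\mathrm{Hom}_{\mathcal{O}_S}(-,\mathcal{O}_S)$ reverses all horizontal arrows and turns the coinvariant quotients into the conformal block subsheaves $\mathcal{V}^+$, while the tensor-factored maps dualize compatibly because of the binaturality of duality on tensor products established in the proof of that lemma. This yields exactly the square asserted in Proposition \ref{factor}, with $\hat\rho_\alpha\otimes\hat\rho_\beta$ on top, $[\sum_\eta C_{\vec\lambda+\vec\gamma}\otimes F_{\eta,\eta^*}]^*$ on the right, and $\sum_\eta\hat\rho_\eta$ on the bottom.

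The main obstacle I anticipate is not the dualization, which is formal once the pre-dual square commutes, but rather verifying carefully that the representation-level diagram commutes with the correct indexing over $\eta$ and the correct intertwiner coefficients $I_{\alpha,\beta}^\eta$. In particular I would want to check that the substitution $f_{\eta,\eta^*}\mapsto F_{\eta,\eta^*}(X)=f_{\eta,\eta^*}(I_{\alpha,\beta}^\eta\otimes X)$ correctly absorbs the multiplicity spaces $W_{\alpha,\beta}^\eta\otimes W_{\alpha^*,\beta^*}^{\eta^*}$, and that the special case $f_{\alpha+\beta,\alpha^*+\beta^*}=C_{\alpha+\beta,\alpha^*+\beta^*}$ is consistent with the highest-weight-vector normalization used to define $C_{\vec\lambda+\vec\gamma}$. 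Confirming that the element $O_{\eta,\eta^*}$ lands in $V(\eta)\otimes V(\eta^*)\subset\mathcal{H}(\eta,\eta^*,L+K)$ and is genuinely $\hat{\mathfrak{g}}$-invariant at the glued punctures, so that the maps are morphisms of $\hat{\mathfrak{g}}(C)$-modules and survive passage to coinvariants, is the delicate point; once it is in hand, the remainder is the mechanical pasting and dualization already encapsulated in the two lemmas above.
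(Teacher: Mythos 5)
Your overall route is the paper's route: dualize the representation-level square displayed just before the proposition, check that each morphism connects the appropriate spaces of invariants (using the factorization theorem and the argument of Lemma \ref{mainsquare}), and conclude by the Dualizing lemma. However, there is a genuine gap at the last step, and the ``delicate point'' you flag is not where the actual difficulty sits. The pre-dual diagram does not stay inside the category of integrable modules: the maps $f_{\eta,\eta^*}$, hence $F_{\eta,\eta^*}$, are built from the finite-dimensional inclusions $W^{\eta}_{\alpha,\beta}\otimes V(\eta)\to V(\alpha)\otimes V(\beta)$, and these extend only to the Verma modules $\bar{V}(\eta)$ of highest weight $(\eta,K+L)$, not to the integrable quotients $\mathcal{H}(\eta,L+K)$. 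Consequently the bottom-right corner of the diagram you propose to dualize is $\bigoplus_{\eta}\mathcal{H}(\vec{\lambda}+\vec{\gamma},K+L)\otimes(W^{\eta}_{\alpha,\beta}\otimes W^{\eta^*}_{\alpha^*,\beta^*}\otimes\bar{V}(\eta)\otimes\bar{V}(\eta^*))$, and after dualizing and taking invariants you land in $\bigoplus_{\eta} V^+_{\hat{\mathfrak{g}}(C\setminus\vec{p},\vec{q})}(\mathcal{H}(\vec{\lambda}+\vec{\gamma},L+K)\otimes\bar{V}(\eta)\otimes\bar{V}(\eta^*))$, which is \emph{not}, by definition, the space $\bigoplus_{\eta}\mathcal{V}^+_{\tilde{C}}(\vec{\lambda}+\vec{\gamma},\eta,\eta^*,L+K)$ appearing in Proposition \ref{factor}.

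Closing this gap is not formal bookkeeping about $O_{\eta,\eta^*}$ and the multiplicity spaces; it requires a nontrivial cited input, namely the theorem of Beauville \cite{B} (also found in \cite{NT}) that for a smooth curve the map identifying highest weight vectors induces an isomorphism
\[
V^+_{\hat{\mathfrak{g}}(C\setminus\vec{p},\vec{q})}(\mathcal{H}(\vec{\alpha},L)\otimes\bar{V}(\vec{\beta}))\cong V^+_{\hat{\mathfrak{g}}(C\setminus\vec{p},\vec{q})}(\mathcal{H}(\vec{\alpha},L)\otimes\mathcal{H}(\vec{\beta},L))=\mathcal{V}^+_{C,\vec{p},\vec{q}}(\vec{\alpha},\vec{\beta},L),
\]
and this is exactly how the paper completes its proof. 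Your proposed check that ``$O_{\eta,\eta^*}$ lands in $V(\eta)\otimes V(\eta^*)\subset\mathcal{H}(\eta,\eta^*,L+K)$'' presupposes the identification that is actually in question: in the relevant diagram the containment is $V(\eta)\otimes V(\eta^*)\subset\bar{V}(\eta)\otimes\bar{V}(\eta^*)$, and without Beauville's comparison of Verma-module invariants with integrable-module invariants the dualized square has the wrong object in its lower-right corner, so the proposition as stated does not follow from your argument.
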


\begin{proof}
We may dualize Diagram \ref{D} to obtain:

\small
$$
\begin{CD}
[\mathcal{H}(\vec{\lambda}, L)]^* \otimes [\mathcal{H}(\vec{\gamma}, K)]^* @<\hat{\rho}_{\alpha} \otimes \hat{\rho}_{\beta}<<  [\mathcal{H}(\vec{\lambda}, L)\otimes \mathcal{H}(\alpha, \alpha^*, K)]^* \otimes [\mathcal{H}(\vec{\gamma}, K) \otimes \mathcal{H}(\beta, \beta^*, L)]^* \\
@V C^*_{\vec{\lambda} + \vec{\gamma}} VV  @V[\sum_{\eta} C_{\vec{\lambda} ; \vec{\gamma}} \otimes \phi_{\eta,\eta}]^* VV \\
[\mathcal{H}(\vec{\lambda} + \vec{\gamma}, K + L)]^* @<\sum_{\eta} \hat{\rho}_{\eta}<<
\bigoplus_{\eta} [\mathcal{H}(\vec{\lambda} + \vec{\gamma}, K + L)\otimes \bar{V}(\eta, K+L) \otimes \bar{V}(\eta^*, K+L)]^* .\\
\end{CD}
$$\\
\normalsize
We must determine what happens to the spaces of conformal blocks inside each of the vector spaces in this diagram. 
It follows that $ C^*_{\vec{\lambda} + \vec{\gamma}}$ maps 
$\mathcal{V}^{\dagger}_{C, \vec{p}}(\vec{\lambda}, L) \otimes \mathcal{V}^{\dagger}_{C, \vec{p}}(\vec{\gamma}, K)$ to 
$\mathcal{V}^{\dagger}_{C, \vec{p}}(\vec{\lambda} + \vec{\gamma}, K + L)$ and $[\sum_{\eta} C_{\vec{\lambda} ; \vec{\gamma}} \otimes \phi_{\eta,\eta}]^*$
maps $ \mathcal{V}^{\dagger}_{\tilde{C}, \vec{p}, q_1, q_2}(\vec{\lambda},\alpha, \alpha^*,K)\otimes \mathcal{V}_{C, \vec{p}, q_1, q_2}^{\dagger}(\vec{\gamma}, \beta, \beta^*, L)$ to $[\mathcal{H}(\vec{\lambda} + \vec{\gamma}, L+K) \otimes \bar{V}(\eta) \otimes \bar{V}(\eta^*)]^{\mathfrak{g}(C \setminus \vec{p}, q_1, q_2)},$ respectively, because both maps are obtained by dualizing and taking invariants.   Furthermore, the map $\hat{\rho}_{\alpha} \otimes \hat{\rho}_{\beta}$ sends vectors from the space $ \mathcal{V}^{\dagger}_{\tilde{C}, \vec{p}, q_1, q_2}(\vec{\lambda},\alpha, \alpha^*,K)\otimes \mathcal{V}_{C, \vec{p}, q_1, q_2}^{\dagger}(\vec{\gamma}, \beta, \beta^*, L)$ into $\mathcal{V}^{\dagger}_{C, \vec{p}}(\vec{\lambda}, L) \otimes \mathcal{V}^{\dagger}_{C, \vec{p}}(\vec{\gamma}, K)$ by the factorization theorem.

In order to analyze the bottom arrow of the diagram, we first consider the projection map, 

\begin{equation}
\pi_{\eta, K +L}: \bar{V}(\eta, K+L) \to \mathcal{H}(\eta, K+L)\\
\end{equation}

\noindent
of highest weight $\hat{\mathfrak{g}}$ modules. The map $[Id\otimes \pi_{\eta, K+L} \otimes \pi_{\eta^*, K+L}]^* \circ \hat{\rho}_{\eta}$,
which takes $ [\mathcal{H}(\vec{\lambda} + \vec{\gamma}, K + L)\otimes \mathcal{H}(\eta, K+L) \otimes \mathcal{H}(\eta^*, K+L)]^*$
to $[\mathcal{H}(\vec{\lambda} + \vec{\gamma}, K + L)]^*$ is by definition equal to the map used in the proof of the factorization rules, this implies
that it takes the space $\mathcal{V}_{\tilde{C}, \vec{p}, q_1, q_2}^{\dagger}(\vec{\lambda} + \vec{\gamma}, \eta, \eta^*, L + K)$ to the space 
$\mathcal{V}_{C, \vec{p}}^{\dagger}(\vec{\lambda} + \vec{\gamma}, L +K)$.  The picture is then completed by a theorem of 
Beauville \cite{B} (proof of Proposition $2.3$), which asserts the following equality for any smooth curve $\tilde{C},$ induced by the maps $\pi_{\eta, K+L}$:

\begin{equation}V_{\hat{\mathfrak{g}}[\tilde{C}, \vec{p}, \vec{q}]}^{\dagger}[\mathcal{H}(\vec{\alpha}, L)\otimes \bar{V}(\vec{\beta}, L)] \cong
V_{\hat{\mathfrak{g}}[\tilde{C}, \vec{p}, \vec{q}]}^{\dagger}[\mathcal{H}(\vec{\alpha}, L) \otimes \mathcal{H}(\vec{\beta}, L)] =
\mathcal{V}^{\dagger}_{\tilde{C}, \vec{p}, \vec{q}}(\vec{\alpha}, \vec{\beta}, L).\end{equation}
\end{proof}

This diagram only represents the case of a curve with one singularity, but the general case follows by the same methods. For any tensor product of elements $\chi_1 \otimes \chi_2 \in \mathcal{V}_{\tilde{C}, \vec{p}, q_1, q_2}^{\dagger}(\vec{\lambda}, \alpha, \alpha^*, L)\otimes \mathcal{V}_{\tilde{C}, \vec{p}, q_1, q_2}^{\dagger}(\vec{\gamma}, \beta, \beta^*, K)$, the multiplication $\chi_1\times \chi_2$ can be expanded as follows:

\begin{equation}\label{product}\chi_1 \times \chi_2 = C_{\vec{\lambda} ; \vec{\gamma}}^*(\hat{\rho}_{\alpha}(\chi_1) \otimes \hat{\rho}_{\beta}(\chi_2)) =   \hat{\rho}_{\alpha+ \beta}\circ C_{\vec{\lambda} ; \vec{\gamma}, \alpha , \beta, \alpha^* , \beta^*}^*(\chi_1 \otimes \chi_2) + \sum \chi_{\eta}.\end{equation}

\noindent
Recall that there is a partial ordering $\prec$ on dominant weights such that $\lambda \prec \gamma$ if and only if $\gamma - \lambda$ is a sum of positive roots.    The $\chi_{\eta}$ in the expansion of $\chi_1\times \chi_2$  are the summands from components of the direct sum with $\eta \prec \alpha + \beta$ as dominant weights, in particular multiplication in $\mathcal{V}_{C, \vec{p}}^{\dagger}$ is multiplication in $\mathcal{V}_{\tilde{C}, \vec{p}, q_1, q_2}^{\dagger}(G)$ with additional ``lower'' terms. We use this observation to build an algebra filtration on $\mathcal{V}_{C, \vec{p}}^{\dagger}(G)$.  Recall that any coweight $\theta$ in the dual Weyl chamber $\Delta^{\vee}$ of $\Delta$ has the property that $\theta(\gamma) \geq \theta(\lambda)$ when $\lambda \prec \gamma$, and furthermore, this inequality is strict when $\theta$ is taken from the interior of $\Delta^{\vee}$.

\begin{definition}
We define a coweighting of the normalized curve $(\tilde{C}, \vec{p}, \vec{q})$ to be an assignment of coweights from the dual Weyl chamber $\theta \in \Delta^{\vee}$ to the new marked points $\vec{q}$, such that identified points $q_1, q_2$ are assigned dual coweights (see Figure \ref{fig:coweight}).
We define a weighting of $(\tilde{C}, \vec{p}, \vec{q})$ analogously. 
\end{definition}

\begin{figure}[htbp]
\centering
\includegraphics[scale = 0.4]{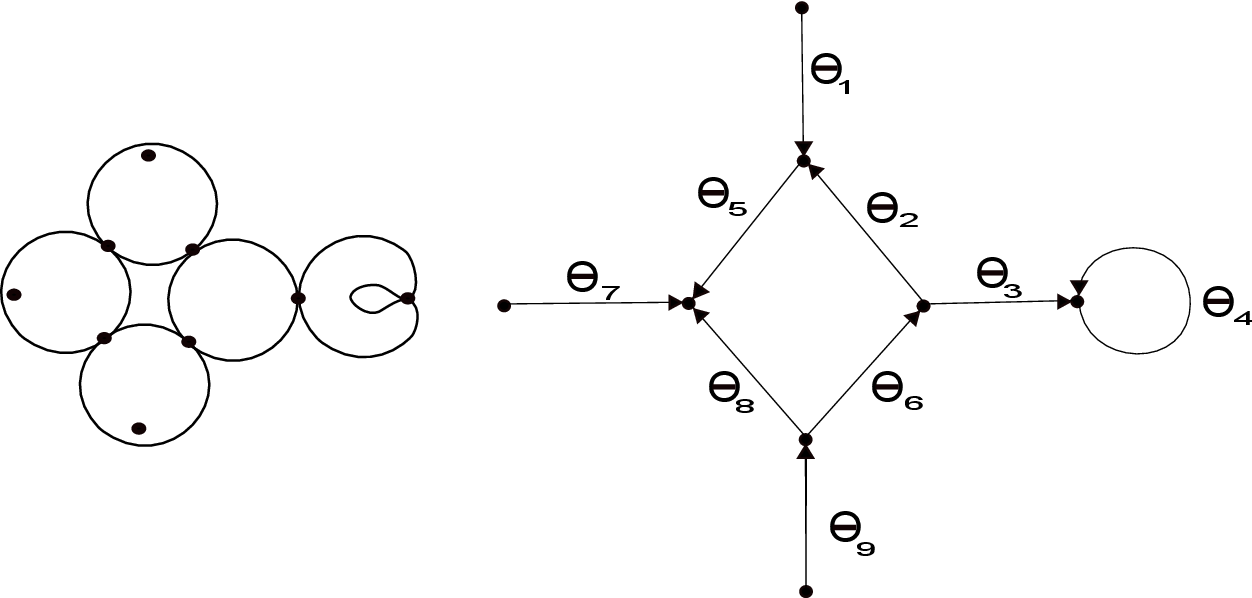}
\caption{A coweighting}
\label{fig:coweight}
\end{figure}

Coweightings and coweightings of $(\tilde{C}, \vec{p}, \vec{q})$ can be visualized as assignments of coweights to the non-leaf edges of the graph $\Gamma$ which labels the stratum of $\bar{\mathcal{M}}_{g, n}$ containing $C$.  For any non-leaf edge $e \in E(\Gamma)$ there are two points $q_1, q_2 \in \vec{q}$, and therefore two dual weights $\alpha, \alpha^*$ assigned to this edge.  We can encode this data by specifying an orientation $q_1 \to q_2$ for each $e \in E(\Gamma)$ and recording one of the weights; it is then understood that the weight recorded is that on $q_1$, and the weight on $q_2$ is obtained by duality.   Now we can denote these objects by a pair $(\Gamma, \vec{\theta})$, where $\Gamma$ is the graph with a choice of orientation, and $\vec{\theta}$ is an assignment of a single coweight (respectively weight) to each non-leaf edge in $\Gamma$.   By extending the natural pairing between weights and coweights bilinearly we can view coweightings of $(\tilde{C}, \vec{p}, \vec{q})$ as linear functions on weightings of $(\tilde{C}, \vec{p}, \vec{q})$:

\begin{equation}
(\Gamma, \vec{\theta})\circ(\Gamma, \vec{\alpha}) = \sum_{e \in Edge(\Gamma)} \theta_e(\alpha_e).\\
\end{equation}

Now we describe a class of filtrations on the algebra of conformal blocks.  Fix a partial normalization $(\tilde{C}, \vec{p}, \vec{q})$ of $(C, \vec{p})$, and choose once and for all an orientation of the edges of the associated $\Gamma$.  Let $\mathcal{V}_{C, \vec{p}}^{\dagger}(\vec{\lambda}, L)_{\vec{\alpha}} \subset \mathcal{V}_{C, \vec{p}}^{\dagger}(\vec{\lambda}, L)$ denote the subspace $\mathcal{V}_{\tilde{C}, \vec{p}, \vec{q}}^{\dagger}(\vec{\lambda}, \vec{\alpha}, \vec{\alpha}^*, L)$ obtained by factorization.  Notice that there is a natural weighting of  $(\tilde{C}, \vec{p}, \vec{q})$ associated to each of these subspaces, namely $(\Gamma, \vec{\alpha})$.   Now, by choosing a coweighting $(\Gamma, \vec{\theta})$, and using the pairing above, we obtain a filtration $\mathcal{F}_{\Gamma, \vec{\theta}}$ on $\mathcal{V}_{C, \vec{p}}^{\dagger}(G)$ by giving $\chi \in  \mathcal{V}_{C, \vec{p}}^{\dagger}(\vec{\lambda}, L)_{\vec{\alpha}}$ filtration level $(\Gamma, \vec{\theta})\circ(\Gamma, \vec{\alpha})$.   

By considering Equation \ref{product} above, we see that the product $\chi_1 \times \chi_2$ and $\hat{\rho}_{\alpha + \beta}\circ C_{\vec{\lambda}, \alpha, \alpha^*;  \vec{\gamma},  \beta, \beta^*}^*(\chi_1 \otimes \chi_2)$ always have the same filtration level, whereas the filtration level of a lower summand $\chi_{\eta}$ is always less than or equal to this value.  For coweightings where all coweights have been chosen in the interior of the dual Weyl chamber of $\mathfrak{g},$ the terms $\chi_{\eta}$ are always given a strictly smaller filtrational level than  $\chi_1 \times \chi_2$. 

\begin{proposition}\label{filterblocks}
The filtration $\mathcal{F}_{\Gamma, \vec{\theta}}$ for $(\Gamma, \vec{\theta})$ respects multiplication on 
the ring $\mathcal{V}_{C, \vec{p}}^{\dagger}(G).$  If the components of $\vec{\theta}$ are 
strictly positive on all positive roots, then the image of the associated graded multiplication map,

\begin{equation}\mathcal{V}_{C, \vec{p}}^{\dagger}(\vec{\lambda}, L)_{\vec{\alpha}} \otimes \mathcal{V}^{\dagger}_{C, \vec{p}}(\vec{\gamma}, K)_{\vec{\beta}} \to \mathcal{F}_{\Gamma, \vec{\theta}}^{\leq \vec{\theta}(\vec{\lambda}, \vec{\alpha} +\vec{\beta}, \vec{\alpha}^* +\vec{\beta}^*)}(\mathcal{V}_{C, \vec{p}}^{\dagger}(G)) /  \mathcal{F}_{\Gamma, \vec{\theta}}^{< \vec{\theta}(\vec{\lambda}, \vec{\alpha} + \vec{\beta}, \vec{\alpha}^* + \vec{\beta}^*)}(\mathcal{V}_{C, \vec{p}}^{\dagger}(G))\end{equation}
can be canonically identified with $\mathcal{V}_{C, \vec{p}}^{\dagger}(\vec{\lambda} + \vec{\gamma}, K+L)_{\vec{\alpha} + \vec{\beta}}.$
Moreover this map coincides with multiplication in the algebra $\mathcal{V}_{\tilde{C}, \vec{p}, \vec{q}}^{\dagger}(G).$  
\end{proposition}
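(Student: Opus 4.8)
The plan is to read off the statement directly from the commuting square in Proposition~\ref{factor} together with the term-level bookkeeping that precedes it. The key structural input is the decomposition
$$\chi_1 \times \chi_2 = C_{\vec{\lambda} + \vec{\gamma}, \alpha + \beta, \alpha^* + \beta^*}^*(\chi_1 \otimes \chi_2) + \sum_{\eta} \chi_{\eta},$$
where every $\eta$ occurring in the sum satisfies $\eta \leq \alpha + \beta$ as dominant weights, with strict inequality whenever $\eta \neq \alpha + \beta$. First I would verify that $F_{\vec{\theta}}$ is a ring filtration: given $\chi_1$ of filter level $\vec{\theta}(\vec{\lambda}, \alpha, \alpha^*)$ and $\chi_2$ of filter level $\vec{\theta}(\vec{\gamma}, \beta, \beta^*)$, the leading term $C^*_{\vec{\lambda}+\vec{\gamma},\alpha+\beta,\alpha^*+\beta^*}(\chi_1 \otimes \chi_2)$ has level $\vec{\theta}(\vec{\lambda}+\vec{\gamma}, \alpha+\beta, \alpha^*+\beta^*)$, which equals the sum of the two input levels because $\vec{\theta}$ is linear in the weight data. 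Hence multiplication sends $F^{\leq a} \cdot F^{\leq b}$ into $F^{\leq a+b}$, establishing that the filtration respects multiplication.

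Next I would show the lower terms drop in filter level. Since each $\chi_\eta$ lives in the block $\mathcal{V}^+_{\tilde{C}}(\vec{\lambda}+\vec{\gamma}, \eta, \eta^*, L+K)$, its filter level is $\vec{\theta}(\vec{\lambda}+\vec{\gamma}, \eta, \eta^*)$. The hypothesis that the components of $\vec{\theta}$ are strictly positive on all positive roots is exactly what forces $\vec{\theta}(\vec{\lambda}+\vec{\gamma}, \eta, \eta^*) < \vec{\theta}(\vec{\lambda}+\vec{\gamma}, \alpha+\beta, \alpha^*+\beta^*)$ whenever $\eta < \alpha + \beta$: writing $(\alpha+\beta) - \eta$ as a nonnegative combination of positive roots and using that $\vec{\theta}$ pairs strictly positively against each of these gives the strict decrease. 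Therefore, modulo $F^{<\vec{\theta}(\vec{\lambda}+\vec{\gamma},\alpha+\beta,\alpha^*+\beta^*)}$, the product $\chi_1 \times \chi_2$ reduces to precisely its leading term $C^*_{\vec{\lambda}+\vec{\gamma},\alpha+\beta,\alpha^*+\beta^*}(\chi_1 \otimes \chi_2)$.

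It then remains to identify this leading operation with multiplication in $\mathcal{V}^+_{\tilde{C},\vec{p},\vec{q}}$. Here I would invoke the commutativity of the square in Proposition~\ref{factor} together with Beauville's identification $V^+_{\hat{\mathfrak{g}}(C\setminus\vec{p},\vec{q})}(\mathcal{H}(\vec{\alpha},L)\otimes \bar V(\vec\beta)) \cong \mathcal{V}^+_{C,\vec{p},\vec{q}}(\vec{\alpha},\vec{\beta},L)$: the $\eta = \alpha+\beta$ summand of the right-hand vertical map is, by the equation $f_{\alpha+\beta,\alpha^*+\beta^*} = C_{\alpha+\beta,\alpha^*+\beta^*}$ recorded just before the proposition, exactly the multiplication map on the normalized curve $\tilde{C}$. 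Passing to the associated graded quotient kills every other $\eta$, so the induced map on the graded piece is canonically $\mathcal{V}^+_{\tilde{C}}(\vec{\lambda},\alpha,\alpha^*,L) \otimes \mathcal{V}^+_{\tilde{C}}(\vec{\gamma},\beta,\beta^*,K) \to \mathcal{V}^+_{\tilde{C}}(\vec{\lambda}+\vec{\gamma},\alpha+\beta,\alpha^*+\beta^*,L+K)$, as claimed.

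The main obstacle I anticipate is the last identification: one must check that the isomorphism coming from factorization is genuinely compatible with the product, i.e.\ that the leading coefficient $C^*_{\vec{\lambda}+\vec{\gamma},\alpha+\beta}$ really carries the highest-weight-vector multiplication on $C$ to the highest-weight-vector multiplication on $\tilde{C}$ under $\hat\rho$. This is precisely what the commuting square encodes, but making it canonical (independent of the chosen tensor-product decomposition) requires the observation, already noted in the derivation of identity~\eqref{identity}, that $O_{\eta,\eta^*}$ and the coefficients $I^{\eta}_{\alpha,\beta}$ are invariant under conjugation, so no ambiguity survives in the associated graded. The filtration and degeneration arguments themselves are then routine.
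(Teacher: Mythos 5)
Your proposal is correct and follows essentially the same route as the paper, whose entire proof is the single line ``This all follows from the commutative diagram above'': you have simply made explicit the bookkeeping the paper leaves implicit, namely that linearity of $\vec{\theta}$ gives the ring-filtration property for the leading term $C^*_{\vec{\lambda}+\vec{\gamma},\alpha+\beta,\alpha^*+\beta^*}(\chi_1\otimes\chi_2)$, that strict positivity on positive roots forces the terms $\chi_\eta$ with $\eta < \alpha+\beta$ strictly down in filtration level, and that the surviving graded piece is multiplication on $\mathcal{V}^+_{\tilde{C},\vec{p},\vec{q}}$ via Proposition~\ref{factor} and $f_{\alpha+\beta,\alpha^*+\beta^*}=C_{\alpha+\beta,\alpha^*+\beta^*}$. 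No gaps; this is the intended argument, spelled out.
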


\begin{proof}
This all follows from Proposition \ref{factor} above and the observation that  $\chi_1 \times \chi_2$ and $\hat{\rho}_{\vec{\alpha} + \vec{\beta}}\circ C_{\vec{\lambda} ; \vec{\gamma}, \vec{\alpha} + \vec{\beta}, \vec{\alpha}^* + \vec{\beta}^*}^*(\chi_1 \otimes \chi_2)$ always have the same filtration level.
\end{proof}

Proposition \ref{filterblocks} shows that if the coweighting $(\Gamma, \vec{\theta})$ is generic, the associated graded algebra $gr_{\mathcal{F}_{\Gamma, \vec{\theta}}}(\mathcal{V}_{C, \vec{p}}^{\dagger})$ is isomorphic to the subalgebra of  $\mathcal{V}_{\tilde{C}, \vec{p}, \vec{q}}^{\dagger}(G)$ formed by the conformal block spaces $\mathcal{V}_{\tilde{C}, \vec{p}, \vec{q}}^{\dagger}(\vec{\lambda}, \vec{\alpha}, \vec{\alpha}^*, L)$; namely those with dual dominant weights assigned to paired points $q_1, q_2$ , and all levels $L$ on different connected components of $\tilde{C}$ equal.   This subalgebra can be realized as the algebra of invariants with respect to a certain torus.   For each oriented edge $e \in E(\Gamma)$ (respectively, ordered pair of points $q_1, q_2 \in \tilde{C}$ associated to a normalized singularity) there is an action of a maximal torus $T \subset G$ on  $\mathcal{V}_{\tilde{C}, \vec{p},  \vec{q}}^{\dagger}(G)$ whose character on the subspace $\mathcal{V}_{\tilde{C}, \vec{p}, \vec{q}}^{\dagger}(\vec{\lambda}, \vec{\alpha}, L)$ is the difference $\alpha_1 - \alpha_2^*$, where $\alpha_i$ is the label of $q_i.$  Similarly, if $\tilde{C}$ has multiple connected components, there action of $\C^*$ for each edge $e \in E(\Gamma)$ whose character is the difference between the levels on the spaces of conformal blocks associated to the components connected by $e.$ We let $T_{\Gamma}$ be the product of all such $T\times \C^*$.  The algebra of invariants $(\mathcal{V}_{\tilde{C}, \vec{p}, \vec{q}}^{\dagger}(G))^{T_{\Gamma}}$ is then the required sum of spaces of conformal blocks. 

Next we show that the associated graded algebra $gr_{\mathcal{F}_{\Gamma, \vec{\theta}}}(\mathcal{V}_{C, \vec{p}}^{\dagger})$ can be realized as a flat degeneration of $\mathcal{V}_{C, \vec{p}}^{\dagger}$.  We may form the Reese algebra $R_{(\Gamma, \vec{\theta})}[\mathcal{V}_{C, \vec{p}}^{\dagger}(G)] \subset \mathcal{V}_{C, \vec{p}}^{\dagger}(G)[t],$ defined as

\begin{equation}
R_{(\Gamma, \vec{\theta})}[\mathcal{V}_{C, \vec{p}}^{\dagger}(G)]  = \bigoplus_{N \geq 0} \mathcal{F}^{\leq N}_{\Gamma, \vec{ \theta}}[\mathcal{V}_{C, \vec{p}}^{\dagger}(G)].\\
\end{equation}

\noindent
The Reese algebra of a filtration is a standard construction in commutative algebra, it is naturally a $\C[t]$ algebra, where $t$ acts by sending an element in filtration level $N$ to its identical copy in level $N +1.$  The following properties are standard, see e.g. \cite{AB}: 

\begin{enumerate}
\item $R_{(\Gamma, \vec{\theta})}[\mathcal{V}_{C, \vec{p}}^{\dagger}(G)]$ is flat over $\C[t]$,\\
\item $\frac{1}{t} R_{(\Gamma, \vec{\theta})}[\mathcal{V}_{C, \vec{p}}^{\dagger}(G)] \cong \mathcal{V}_{C, \vec{p}}^{\dagger}(G)[t, \frac{1}{t}]$,\\
\item $R_{(\Gamma, \vec{\theta})}[\mathcal{V}_{C, \vec{p}}^{\dagger}(G)]/t$ is the associated graded algebra $(\mathcal{V}_{\tilde{C}, \vec{p}, \vec{q}}^{\dagger}(G))^{T_{\Gamma}}.$\\
\end{enumerate}

This construction gives a flat degeneration with generic fiber $\mathcal{V}_{C, \vec{p}}^{\dagger}(G),$ and special fiber equal to the subalgebra $(\mathcal{V}_{\tilde{C}, \vec{p}, \vec{q}}^{\dagger}(G))^{T_{\Gamma}}$ of $\mathcal{V}_{\tilde{C}, \vec{p},  \vec{q}}^{\dagger}(G).$  This proves Theorem \ref{T2} and Theorem \ref{main}. 

\begin{remark}
The coordinate ring of the group $G$ has the following decomposition into isotypical $G\times G$ components (see \cite[Theorem 12.9]{Gr}): 

\begin{equation}
\C[G] = \bigoplus_{\lambda \in \Delta} V(\lambda, \lambda^*).\\
\end{equation}

\noindent
The maps $\phi_{\eta, \eta^*}$ are the components of the dual of the multiplication map $m: \C[G] \otimes \C[G] \to \C[G]$
on this coordinate ring. Using Proposition \ref{reform} and a suitable modification of Diagram \ref{D}, we can construct an isomorphism of algebras out of the factorization maps $\rho_{\alpha}: \mathcal{H}(0, L)\otimes V(\vec{\lambda}) \to \mathcal{H}(0, L)\otimes V(\vec{\lambda}, \alpha, \alpha^*).$  

\begin{equation}
\rho: \mathcal{H}(0, L) \otimes V(\vec{\lambda}) \to \bigoplus_{\alpha \in \Delta_L} \mathcal{H}(0, L)\otimes V(\vec{\lambda},\alpha, \alpha^*)\\ 
\end{equation}

\begin{equation}
\rho^*: [\C[Q]\otimes \C[G/U]^{\otimes n} \otimes \C[G]]^{\hat{\mathfrak{g}}[\tilde{C}, p, q_1, q_2]} \cong [\C[Q] \otimes \C[G/U]^{\otimes n}]^{\hat{\mathfrak{g}}[C, p]} = \mathcal{V}_{C, \vec{p}}^{\dagger}(G)\\
\end{equation}

\noindent
Here $\C[Q] = \bigoplus_{L \geq 0} \mathcal{H}(0, L)^*$ is the total coordinate ring of the affine Grassmannian variety,
and $\C[G/U] = \bigoplus_{\lambda \in \Delta} V(\lambda)$ is the coordinate ring of the quotient of $G$ by a maximal unipotent subgroup. Let $G/\!\!/U = Spec(\C[G/U])$ be the GIT quotient of $G$ by $U$, notice that the grading by dominant weight corresponds to a residual $T$ action on $G/\!\!/U$. The degeneration constructed in this section is then induced by the so-called horospherical contraction of the group scheme $G$, see \cite{Po}. This is a flat $G\times G$ degeneration of $G$ to the scheme $[G/\!\!/U \times G/\!\!/U]/T$, where the $T$ action is defined through the residual $T\times T$ action on $ G/\!\!/U \times G/\!\!/U$ so that $V(\lambda)\otimes V(\lambda^*) \subset \C[G/\!\!/U]\otimes C[G/\!\!/U]$ is invariant. 
\end{remark}

\begin{remark}
The association of a filtration (actually a valuation) on $\mathcal{V}_{C, \vec{p}}^{\dagger}(G)$ to a coweighting $(\Gamma, \vec{\theta})$ suggests that coweightings should play a role in the tropical theory of the moduli of principal bundles, see \cite{Pa}. 
\end{remark}

\begin{example}\label{ex}

We compute an example for the curve in Figure \ref{fig:EX1}.

\begin{figure}[htbp]
\centering
\includegraphics[scale = 0.5]{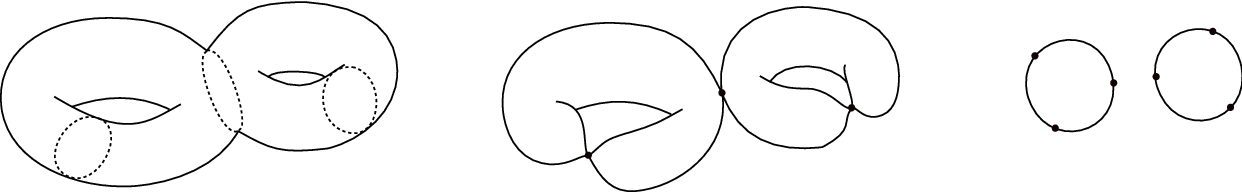}
\caption{Curve of genus $2,$ stable curve of genus $2$ and disjoint union
of two triple marked curves of genus $0.$}
\label{fig:EX1}
\end{figure}

Explicitly we have,

\begin{equation}
(\mathcal{V}_{\tilde{C'}, \vec{q}}^{\dagger}(G))^T = (\mathcal{V}_{\mathbb{P}^1, q_1, q_2, q_3}^{\dagger}(G) \otimes \mathcal{V}_{\mathbb{P}^1, q_4, q_5, q_6}^{\dagger}(G))^T = \\
\end{equation}

$$ \bigoplus_{L, \alpha, \beta, \gamma} [\mathcal{V}^{\dagger}_{\mathbb{P}^1, q_1, q_2, q_3}(\alpha, \alpha^*, \beta, L) \otimes \mathcal{V}^{\dagger}_{\mathbb{P}^1, q_4, q_5, q_6}(\beta^*, \gamma, \gamma^*, L)].$$

Multiplication is computed component-wise over the tensor product.
In the case $\mathfrak{g} = sl_2(\C),$ this ring is the semigroup of weightings on the
graph pictured in Figure \ref{fig:EX2}, where the middle edge is always weighted even and less than or equal to twice either of the loop edges, and the sum of twice either loop edge and the middle edge is bounded by the level.

\begin{figure}[htbp]
\centering
\includegraphics[scale = 0.3]{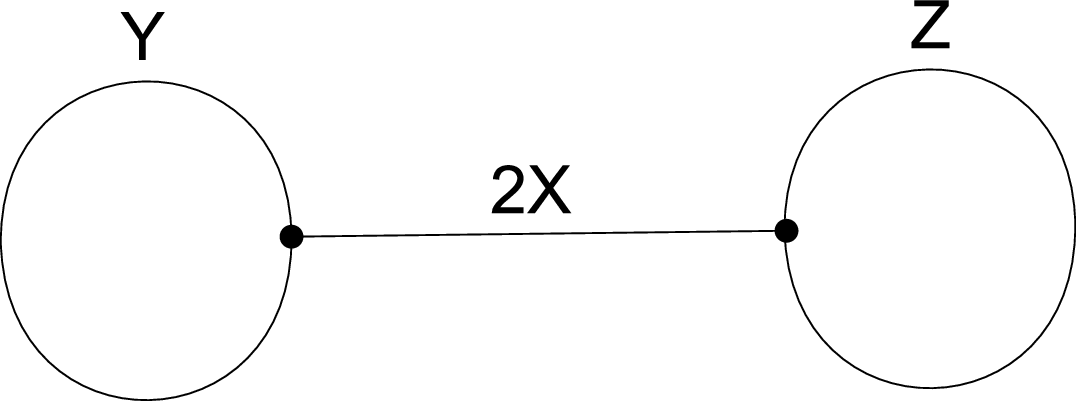}
\caption{Weighted genus $2$ graph.}
\label{fig:EX2}
\end{figure}

\end{example}

\section{Correlation and the genus 0 case}\label{branch}

In this section we prove Theorem \ref{G0}, which relates the degeneration on $\mathcal{V}^{\dagger}_{C, \vec{p}}(G)$ to a similar construction on an algebra $\hat{\mathfrak{A}}_n$ which we now describe.  Recall that $\mathfrak{A}_n$ is the algebra of left diagonal $G$ invariants in $\C[G/U]^{\otimes n}$, we let $M_n = Spec(\mathfrak{A}_n)$.  The coordinate ring of $G/U$ is known to be a multiplicity-free direct sum of the irreducible representations of $G$  (see e.g. \cite{PV}) with multiplication computed by the maps $C_{\lambda ; \gamma}^*: V(\lambda^*) \otimes V(\gamma^*) \to V(\lambda^* + \gamma^*)$.   It follows that the coordinate ring $\mathfrak{A}_n$ of this space is the graded direct sum $\bigoplus_{\vec{\lambda} \in \Delta^n} [V(\vec{\lambda})]^{\mathfrak{g}}$.  The algebra $\hat{\mathfrak{A}}_n$ is defined to be the following Rees algebra of $\mathfrak{A}_n$:

\begin{equation}
\hat{\mathfrak{A}}_n = \bigoplus_{\vec{\lambda} \in \Delta^n, \lambda_i(\theta^{\vee}) \leq L} [V(\vec{\lambda})]^{\mathfrak{g}}t^L.\\
\end{equation}

Now we may observe that there is a correlation map $F_{C, \vec{p}} :\mathcal{V}^{\dagger}_{C, \vec{p}}(G) \to \hat{\mathfrak{A}}_n$ which is a map of algebras by Lemma \ref{mainsquare}.

\subsection{The correlation morphism}

The invariant spaces $[V(\vec{\lambda})]^{\mathfrak{g}}$ also have a factorization property, so a simplified version of the construction in Section \ref{filter} applies the algebra $\mathfrak{A}_n$ (see also \cite{M2}).

\begin{proposition}\label{F}
We let $S_1 \cup S_2 = [n]$ be a partition of $n$ indices, and we let $\tree$ be the tree with leaves in bijection with $[n]$ and one interior edge defined by this partition.   There is a factorization isomorphism $\sum \rho_{\eta}: \bigoplus_{\eta} [V(\vec{\lambda}_1, \eta)]^{\mathfrak{g}} \otimes [V( \eta^*, \vec{\lambda}_2)]^{\mathfrak{g}} \to  [V(\vec{\lambda}_1, \vec{\lambda}_2)]^{\mathfrak{g}}$.  This defines a filtration $\mathcal{F}_{\tree, \vec{\theta}}$ on $\hat{\mathfrak{A}}_n$ for each coweighting of the tree $\tree$.  If the components of $\vec{\theta}$ are strictly positive on all positive roots, then the associated graded algebra is isomorphic to $[\hat{\mathfrak{A}}_{|S_1|}\otimes \hat{\mathfrak{A}}_{|S_2|}]^{T\times \C^*}.$
\end{proposition}

\begin{proof}
Apply the steps of Section \ref{filter}.  
\end{proof}

   In Section \ref{sheaf} we showed that there is a $1-1$ map $F_{C, \vec{p}}: \mathcal{V}^{\dagger}_{C, \vec{p}}(G) \to \hat{\mathfrak{A}}_n$ when $C$ is a genus $0$ stable curve.  Next we show that over a stable curve of type $\tree,$ the correlation morphism $F_{C, \vec{p}}: \mathcal{V}^{\dagger}_{C, \vec{p}}(G) \to \hat{\mathfrak{A}}_n$ intertwines the factorization of conformal blocks with the branching decomposition defined by $\tree$ on $\hat{\mathfrak{A}}_n.$     Let $C, \vec{p}_1, \vec{p}_2$ be a stable curve of type $\tree,$ with $\tilde{C} = C_1 \cup C_2.$  We have a commuting square of $\mathfrak{g}$ representations, 

$$
\begin{CD}
\mathcal{H}(\vec{\lambda}, L) @> \rho_{\eta} >> \mathcal{H}(\vec{\lambda_1}, \eta, L) \otimes \mathcal{H}(\eta^*, \vec{\lambda_2}, L) \\
@Ai AA @A i \otimes i AA\\
V(\vec{\lambda}) @> \rho_{\eta} >> V(\vec{\lambda_1}, \eta) \otimes V(\eta^*, \vec{\lambda_2})\\
\end{CD}
$$

\noindent
We may dualize this diagram, and take invariants with respect to the Lie algebras $\hat{\mathfrak{g}}[C, \vec{p}_1, \vec{p}_2]$ on the top left, $\hat{\mathfrak{g}}[C_1 \setminus \vec{p}_1] \oplus \hat{\mathfrak{g}}[C_2 \setminus \vec{p}_2]$ on the top right, $\mathfrak{g}$ on the bottom left, and $\mathfrak{g} \oplus \mathfrak{g}$ on the bottom right.    The top morphism $\hat{\rho}_{\eta}: [\mathcal{H}(\vec{\lambda_1}, \eta, L) \otimes \mathcal{H}(\eta^*, \vec{\lambda_2}, L)]^* \to \mathcal{H}(\vec{\lambda}, L)^*$ on the dual spaces takes the graded component $\mathcal{V}_{C_1, \vec{p}_1, q_1}^{\dagger}(\vec{\lambda}_1, \eta, L) \otimes \mathcal{V}_{C_2, \vec{p}_2, q_2}^{\dagger}(\eta^*, \vec{\lambda}_2, L)$ into $\mathcal{V}_{C, \vec{p}_1, \vec{p}_2}^{\dagger}(\vec{\lambda}, L)$ by the factorization rules.  The bottom morphism $\hat{\rho}_{\eta}: [V(\vec{\lambda_1}, \eta) \otimes V(\eta^*, \vec{\lambda_2})^*]^{\mathfrak{g}} \to V(\vec{\lambda})^*$ takes  $[V(\vec{\lambda}_1 ,\eta)] \otimes [V(\eta^*,\vec{\lambda}_2)]^{\mathfrak{g}}$ to $[V(\vec{\lambda})]^{\mathfrak{g}}$ by Proposition \ref{F}. The fact that these invariant spaces are connected by the appropriate morphisms, along with the commutivity of the dual diagram implies that the following diagram commutes.

$$
\begin{CD}
\mathcal{V}_{C, \vec{p}_1, \vec{p}_2}^{\dagger}(\vec{\lambda}, L) @< \hat{\rho}_{\eta} << \mathcal{V}_{C_1, \vec{p}_1, q_1}^{\dagger}(\vec{\lambda}_1, \eta, L) \otimes \mathcal{V}_{C_2, \vec{p}_2, q_2}^{\dagger}(\eta^*, \vec{\lambda}_2, L) \\
@V F_{C, \vec{p}_1, \vec{p}_2} VV @V F_{C_1, \vec{p}_1, q_1} \otimes F_{C_2, q_2 \vec{p}_2} VV \\
[V(\vec{\lambda})^*]^{\mathfrak{g}} @< \hat{\rho}_{\alpha} << [V(\vec{\lambda}_1 ,\eta)^*]^{\mathfrak{g}} \otimes[V(\eta^*,\vec{\lambda}_2)^*]^{\mathfrak{g}}\\
\end{CD}
$$

\noindent
This shows that the direct sum decompositions of $\mathcal{V}_{C, \vec{p}_1, \vec{p}_2}^{\dagger}(\vec{\lambda}, L)$ and $[V(\vec{\lambda})]^{\mathfrak{g}}$ from the factorization rules are compatible, and implies that the the filtrations on the branching algebras and the algebras of conformal blocks agree. Theorem \ref{G0} follows by induction.   

\begin{remark}
Whenever $\tilde{C}$ is a disjoint union $C_1 \cup C_2$ 
the diagram above commutes.  This implies a version
of theorem \ref{G0} is true for general genus, except the correlation
$F_{C, \vec{p}}$ is no longer a monomorphism. 
\end{remark}

\subsection{The case $0, 3$}

For a genus $0,$ triple marked curve there is no moduli, $\mathcal{M}_{0, 3} = \{pt\},$ so the algebra of conformal blocks is unique.  In this case, conformal blocks have a purely representation-theoretic description as a subspace of the space of invariants.  See \cite{TUY} for the following. 

\begin{proposition}\label{tensblock}
The space $\mathcal{V}_{0,3}^{\dagger}(\lambda, \gamma, \mu, L) \subset [V(\lambda^*)\otimes V(\gamma^*) \otimes V(\mu^*)]^{\mathfrak{g}}$  has the following description.  Consider the factorization of $V(\lambda^*)$, $V(\gamma^*)$ and $V(\mu^*)$ as $sl_2(\C)$ representations with respect to the longest root $\theta: sl_2(\C) \to \mathfrak{g},$ $V(\lambda^*) = \bigoplus W(\lambda^*, i) \otimes V(i),$ $V(\gamma^*) = \bigoplus W(\gamma^*, j) \otimes V(j),$ $V(\mu^*) = \bigoplus W(\mu^*, k) \otimes V(k).$
Let $W(\lambda^*, \gamma^*, \mu^*, L)$ be the subspace of $V(\lambda^*)\otimes V(\gamma^*) \otimes V(\mu^*)$ of components $V(i)\otimes V(j) \otimes V(k)$ with $i + j + k \leq 2L$, then:  

\begin{equation}
\mathcal{V}_{0, 3}^{\dagger}(\lambda, \gamma, \mu, L) = W(\lambda^*, \gamma^*, \mu^*, L) \cap (V(\lambda^*)\otimes V(\gamma^*) \otimes V(\mu^*))^{\mathfrak{g}}.\\
\end{equation}

\end{proposition}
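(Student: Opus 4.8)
The plan is to derive the statement from the genus-$0$ monomorphism already in hand together with the classical fusion rules for the affine $sl_2(\C)$ attached to the longest root. First I would invoke the corollary that for $g=0$ the correlation morphism $F$ is a monomorphism, so that $\mathcal{V}_{0,3}^+(\lambda,\gamma,\mu,L)$ is realized as a subspace of $(V(\lambda^*)\otimes V(\gamma^*)\otimes V(\mu^*))^{\mathfrak{g}}$. Both sides of the asserted equality are then subspaces of the same invariant space, so it suffices to match them. Unwinding the definition, a conformal block is a functional on $\mathcal{H}(\lambda,\gamma,\mu,L)$ annihilated by $\mathfrak{g}\otimes\C[\mathbb{P}^1\setminus\{p_1,p_2,p_3\}]$; its restriction to the bottom $V(\lambda)\otimes V(\gamma)\otimes V(\mu)$ is the image under $F$, and the image of $F$ is precisely the set of $\mathfrak{g}$-invariant directions that survive the passage from the Verma-type modules to their integrable quotients. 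Thus the whole content of the proposition is the identification of this survival condition with membership in $W(\lambda^*,\gamma^*,\mu^*,L)$.

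Second, I would localize the obstruction at the longest root. The integrable quotient $\mathcal{H}(\eta,L)$ differs from the corresponding Verma module only through the single null relation $(X_\omega\otimes t^{-1})^{L-\eta(H_\omega)+1}v_\eta=0$, which involves solely the triple $(X_\omega,H_\omega,X_{-\omega})$ spanning an $sl_2(\C)$ attached to $\omega$. Consequently, once classical $\mathfrak{g}$-invariance has been imposed, the only further constraints distinguishing invariants from blocks are those detected by the affine $\widehat{sl_2}$ generated by this triple. Restricting the decompositions $V(\eta^*)=\bigoplus W(\eta^*,\cdot)\otimes V(\cdot)$ to this $sl_2(\C)$ and using $V(i)^*\cong V(i)$, the obstruction is carried by the $sl_2(\C)$-graded pieces $V(i)\otimes V(j)\otimes V(k)$ of the tensor product; the directions killed in the integrable quotient are exactly the high-grade ones, so an invariant extends to a block if and only if its $sl_2(\C)$-graded support avoids the forbidden region. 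Here I would also record that $\eta\in\Delta_L$ forces the maximal $H_\omega$-eigenvalue $\eta(H_\omega)$, and hence every label $i,j,k$, to be $\le L$, so each $V(i),V(j),V(k)$ is already an integrable level-$L$ weight and no individual bound is lost.

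Third, I would apply the classical $\widehat{sl_2}$ fusion rule, which is exactly the $sl_2(\C)$ case of \cite{TUY}: for integrable labels the three-point block $\mathcal{V}_{0,3}^+(i,j,k,L)$ fills out the entire (one-dimensional) $sl_2(\C)$-invariant space when $i+j+k$ is even, the triangle inequalities hold, and $i+j+k\le 2L$, and it vanishes once $i+j+k> 2L$. Since any nonzero $\mathfrak{g}$-invariant supported on a piece is in particular $sl_2(\C)$-invariant there, it already forces the parity and triangle conditions; the only genuinely new requirement is therefore $i+j+k\le 2L$, which is precisely the defining condition of $W(\lambda^*,\gamma^*,\mu^*,L)$. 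Assembling the graded pieces yields the identification $\mathcal{V}_{0,3}^+(\lambda,\gamma,\mu,L)=W(\lambda^*,\gamma^*,\mu^*,L)\cap(V(\lambda^*)\otimes V(\gamma^*)\otimes V(\mu^*))^{\mathfrak{g}}$.

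The main obstacle is the second step: justifying rigorously that the integrability obstruction is governed entirely by the longest-root $\widehat{sl_2}$ and is detected on the $sl_2(\C)$-graded support, so that no higher affine relations impose conditions beyond $i+j+k\le 2L$. This is the reduction that collapses the general three-point count for $\mathfrak{g}$ to an $sl_2(\C)$ computation, and it is the point at which I would lean on the explicit analysis of the coinvariants on the thrice-punctured $\mathbb{P}^1$ in \cite{TUY}; the remaining bookkeeping with the multiplicity spaces $W(\eta^*,\cdot)$ and the dualities $V(i)^*\cong V(i)$ is then routine.
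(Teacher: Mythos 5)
Your outline is sound, but at its crux it rests on exactly the same source the paper itself does: the paper offers no proof of this proposition at all, importing it directly from \cite{TUY} (``see \cite{TUY} for the following''), and your step two --- the reduction of the integrability constraint to the longest-root $sl_2(\C)$ and its graded support, which you admit you would ``lean on'' \cite{TUY} for --- is precisely the content being cited, so your attempt is in substance the same appeal to \cite{TUY}. The scaffolding you add around it (injectivity of the correlation morphism $F$ in genus $0$, the $sl_2(\C)$ fusion rules on each piece $V(i)\otimes V(j)\otimes V(k)$, and the observation that $\lambda\in\Delta_L$ forces every $sl_2$-label to be at most $L$) is correct and consistent with how the paper frames the result.
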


\begin{remark}\label{canonicaldegen}
The degeneration constructed in Section \ref{filter} could be completed to a toric degeneration given any $T^3$-invariant toric degeneration of $\mathcal{V}_{0, 3}^{\dagger}(G)$.   A sufficient condition for the existence of such a filtration would be a basis $B(\lambda, \gamma, \mu)$ of each space $[V(\lambda)\otimes V(\gamma) \otimes V(\mu)]^{\mathfrak{g}}$ with the following properties: 

\begin{enumerate}
\item The bases $B(\lambda, \gamma, \mu)$ have a ``lower-triangular multiplication" property with respect to the multiplication in $\mathfrak{A}_3$,\\
\item The intersection $B(\lambda, \gamma, \mu) \cap \mathcal{V}_{0, 3}^{\dagger}(\lambda, \gamma, \mu, L) \subset [V(\lambda^*) \otimes V(\gamma^*) \otimes V(\mu^*)]^{\mathfrak{g}}$ is a basis for each $L$.\\
\end{enumerate}

Lusztig's dual canonical basis satisfies the first property above, and the resulting degenerations are explored in \cite{M2}.  In \cite{M4} we use this technique to build toric degenerations of the algebra of conformal blocks when $G = SL_3(\C).$
\end{remark}

\subsection{Projective coordinate rings}\label{cspaces}

Recall that the coordinate ring $\mathfrak{A}_n$ is the multiplicity-free direct sum of the invariant spaces $V_{\mathfrak{g}}^{\dagger}(V(\vec{\lambda}))$, and that this algebra is graded by the tuples of dominant weights $\vec{\lambda} \in \Delta^n.$  This grading corresponds to a right action of $T^n$ on $M_n$.  We let $M_{\vec{\lambda}}$ be the GIT qoutient of $M_n$ by $T^n$ with respect to the character defined by the tuple $\vec{\lambda}$; this is the space of configurations associated to $\vec{\lambda}.$  The projective coordinate ring naturally associated to $M_{\vec{\lambda}}$ by this construction is then the direct sum $\mathfrak{A}_{\vec{\lambda}} \subset \mathfrak{A}_n$ of the invariant spaces  $[V(K\vec{\lambda})]^{\mathfrak{g}}$ for $K \geq 0.$ 

All of the techniques we have used to study algebras of conformal blocks and branching algebras are carried out on graded pieces of
these algebras.  Because of this, much of what we say can be extended to nice graded subalgebras, in particular these statements apply to the projective coordiante ring of the coarse moduli spaces $\mathcal{M}_{C, \vec{p}}(\vec{\lambda}, L)$.  In particular, the correlation map $F_{C, \vec{p}}: \mathcal{V}^{\dagger}_{C, \vec{p}}(G) \to \hat{\mathfrak{A}}_n$ induces a map on projective coordinate rings:

\begin{equation}F_{C, \vec{p}}^{\vec{\lambda}, L}: R_{C, \vec{p}}(\vec{\lambda}, L) \to \mathfrak{A}_{\vec{\lambda}}.\end{equation}

\noindent
When $g = 0,$ this map is a monomorphism, in which case can deduce the following.

\begin{proposition}
For $g = 0$ and $L >> 0$ the map $F_{C, \vec{p}}^{\vec{\lambda}, L}$ above
is an isomorphism. 
\end{proposition}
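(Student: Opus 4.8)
The plan is to use that $F_{C,\vec p}$ is already known to be a monomorphism in genus $0$, so that only surjectivity of $F^{\vec\lambda}_{C,\vec p}$ remains, and to prove that surjectivity degree-by-degree with a single level $L$ that works simultaneously for all $N$. Under the identifications $H^0(\mathcal{M}_{C,\vec p}(\vec\Lambda),\mathcal{L}(N\vec\lambda,NL))\cong \mathcal{V}^+_{C,\vec p}(N\vec\lambda,NL)$ and $H^0(P_{\vec\lambda}(G),\mathcal{L}(N\vec\lambda))\cong V(N\vec{\lambda^*})^{\mathfrak g}$, the degree-$N$ piece of $F^{\vec\lambda}_{C,\vec p}$ is exactly the correlation map $\mathcal{V}^+_{C,\vec p}(N\vec\lambda,NL)\hookrightarrow V(N\vec{\lambda^*})^{\mathfrak g}$. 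So I would reduce the proposition to the claim that for $L\gg 0$, depending only on $\vec\lambda$ and on the stable type $\tree$ and not on $N$, this inclusion is an equality for every $N\geq 1$.

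First I would settle the trivalent case $n=3$ using the explicit description of $\mathcal{V}^+_{0,3}$. There the block is cut out inside $(V(\lambda^*)\otimes V(\gamma^*)\otimes V(\mu^*))^{\mathfrak g}$ by the single inequality $i+j+k\leq 2L$ on the $\mathfrak{sl}_2(\C)$-weights of the components with respect to the longest root $\omega$. Replacing $(\vec\lambda,L)$ by $(N\vec\lambda,NL)$ multiplies the level bound by $N$, while the largest $\omega$-weight occurring in $V(N\lambda^*)$ is $N\lambda^*(H_\omega)$; thus the constraint reads $i+j+k\leq 2NL$ against components whose $\omega$-weights sum to at most $N(\lambda^*+\gamma^*+\mu^*)(H_\omega)$. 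Hence choosing $L$ with $2L\geq(\lambda^*+\gamma^*+\mu^*)(H_\omega)$ makes the inequality vacuous for all $N$ at once, so that $\mathcal{V}^+_{0,3}(N\vec\lambda,NL)=V(N\vec{\lambda^*})^{\mathfrak g}$ identically. This uniform-in-$N$ estimate is the crux.

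Next I would bootstrap to arbitrary $n$ by factorization. Degenerating $C$ to the stable curve of type $\tree$ and applying Theorem \ref{T2} together with the factorization theorem writes $\mathcal{V}^+_{0,n}(N\vec\lambda,NL)$ as a sum over internal-edge labellings $\vec\alpha\in\Delta_{NL}$ of tensor products of trivalent blocks, while Lemma \ref{F} writes the classical invariant space $V(N\vec{\lambda^*})^{\mathfrak g}$ as the analogous sum over all internal weights, and Theorem \ref{G0} intertwines the two decompositions under $F$. Consequently $F$ is an isomorphism in degree $N$ provided (a) every trivalent factor is an isomorphism, which holds by the previous paragraph, and (b) the level truncation $\vec\alpha\in\Delta_{NL}$ removes no term occurring in the classical sum. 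For (b) I would again bound $\omega$-heights: an intermediate weight $\eta$ at an edge occurs only inside a tensor product of the representations on the adjacent edges, so $\eta(H_\omega)$ is at most the sum of the $\omega$-heights of the adjacent labels, a quantity linear in $N$ with slope determined by $\vec\lambda$ and $\tree$. Taking $L$ larger than the maximum of these slopes over the finitely many edges of $\tree$ makes $\Delta_{NL}$ contain every classical intermediate weight for all $N$, so no truncation occurs.

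Putting these together, for $L$ exceeding a bound depending only on $\vec\lambda$ and $\tree$, the degree-$N$ map is simultaneously injective (genus $0$) and surjective (the two factorizations coincide term-by-term) for every $N\geq 1$, hence bijective; therefore $F^{\vec\lambda}_{C,\vec p}$ is an isomorphism of graded rings. I expect the main obstacle to be step (b): one must verify that the \emph{internal} level truncations, and not merely the external weights, are rendered inert by a single $N$-independent choice of $L$, which is precisely where the linearity of $\omega$-heights under $\lambda\mapsto N\lambda$ is essential. Without this uniformity one would only obtain surjectivity in each fixed degree separately, rather than an isomorphism of the whole Rees-type algebra.
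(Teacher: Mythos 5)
Your proof is correct and follows the route the paper intends: injectivity of the genus-$0$ correlation morphism combined with the observation that, because the level scales linearly along with the weights ($N\vec{\lambda}$ at level $NL$), a single choice of $L$ with $2L$ exceeding the relevant sums of $\omega$-heights makes every level truncation — at the trinodes and on the internal edges of the factorization — vacuous for all $N$ simultaneously, so the blocks exhaust the classical invariants. The paper itself offers no written argument beyond citing the injection (remarking only that the statement is comparable to remark 4.3 of Teleman--Woodward), so your uniform-in-$N$ bound via $\omega$-heights, the explicit three-point description, and factorization is precisely the detail left implicit, and you have supplied it correctly.
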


\begin{proof}
The algebra $\mathfrak{A}_{\vec{\lambda}}$ is finitely generated, say by the spaces $[V(\vec{N_i\lambda^*})]^{\mathfrak{g}}$ for $i = 1, \ldots, k$. 
Each of these spaces is filtered by the spaces of conformal blocks $\mathcal{V}_{C, \vec{p}}^{\dagger}(\vec{\lambda}_i, L)$. It follows that if $L$ is chosen to be sufficiently large,  $\mathcal{V}_{C, \vec{p}}^{\dagger}(\vec{\lambda}_i, L) = [V(N_i\vec{\lambda^*})]^{\mathfrak{g}}$, so 
that $ R_{C, \vec{p}}(\vec{\lambda}, L) \cong \mathfrak{A}_{\vec{\lambda}}.$
\end{proof}

Compare this proposition to Remark $4.3$ in \cite{TW}.  The algebra $R_{C, \vec{p}}(\vec{\lambda}, L)$ is the projective coordinate ring of $M_{C, \vec{p}}^{ss}(\vec{\lambda}, L),$ the coarse moduli space of semi-stable bundles, where the semi-stability condition is determined by the data $(\vec{\lambda}, L).$ Degenerations associated to labelled trees carry over to these algebras as well.  This implies that for large $L,$ a toric degeneration of $M_{\vec{\lambda}}$ gives a toric degeneration of a ring of generalized theta functions.  Toric degenerations of $\mathfrak{A}_{\vec{\lambda}}$ can be constructed from a toric degeneration of $\mathfrak{A}_n$, see \cite{M2} and \cite{M14}.

\section{The case $\mathfrak{g} = sl_2(\C)$}\label{sl2}

The results of this section should be of independent interest for readers interested in the $\Z/2\Z$ group-based phylogenetic statistical models, \cite{BBKM}, \cite{Bu}, \cite{BW}. We refer the reader to \cite{KM} for the connection between conformal blocks and other group-based  models.

\subsection{Proof of Theorem \ref{degensl2}}

 For $sl_2(\C)$ dominant weights $r_1, r_2, r_3 \in \Z_{\geq 0}$, the spaces  $(V(r_1)\otimes V(r_2)\otimes V(r_3))^{sl_2(\C)}$ are multiplicity free, and are non-trivial when $r_1 + r_2 + r_3 \in 2\Z$ and $|r_1 - r_3| \leq r_2 \leq r_1 + r_3,$ these are known as the Clebsch-Gordon conditions.  Proposition \ref{tensblock} implies that the spaces $\mathcal{V}_{0,3}^{\dagger}(r_1, r_2, r_3, L)$ are also multiplicity free, and that they are non-trivial when $r_1 + r_2 + r_3 \leq 2L$ and the Clebsch-Gordon conditions are satisfied, these are known as the Quantum Clebsch-Gordon conditions.  The following is a consequence of the fact that the weights $(r_1, r_2, r_3, L)$ define a multigrading of $\mathcal{V}_{0,3}^{\dagger}(SL_2(\C))$.

\begin{proposition}
For $G = SL_2(\C)$ the algebra $\mathcal{V}_{0, 3}^{\dagger}(SL_2(\C))$ is isomorphic to the graded affine semigroup algebra associated to the polytope $P_3 = conv\{(0,0,0),$ $(1, 1, 0),$ $(1, 0, 1),$ $(0, 1, 1)\} \subset \R^3,$ with the respect to the lattice determined by $r_i \in \Z$, $r_1 + r_2 + r_3 \in 2\Z$.
\end{proposition}

Now we describe the algebra $\bigotimes_{v \in V(\Gamma)} \mathcal{V}_{0, 3}^{\dagger}(SL_2(\C)) = \C[ P_3^{\times V(\Gamma)}]$ and its $T_{\Gamma}$-invariant subalgebra.  We have associated a copy of $P_3$ to each vertex $v \in V(\Gamma)$, and likewise each entry $r_1, r_2, r_3$ of a point in this $P_3$ is assigned to an edge incident on $v.$  The isotypical spaces of the $T_{\Gamma}$ action on $\C[ P_3^{\times V(\Gamma)}]$ are each spanned by a lattice point $\vec{w} \in P_3^{V(\Gamma)}$. The character of the $T\times \C^*$ action associated to a given edge $e \in E(\Gamma)$ on a $\vec{w}$ returns the difference $L_v(\vec{w}) - L_u(\vec{w})$ of the levels on the end points $\{u, v\}$ of $e$, and the difference $w_v(e) - w_u(e)$ between the weights assigned $e$ by the $v$ and $u$ components of $\vec{w}.$   Taking the torus invariants $[\C[P_3]^{\otimes |V(\Gamma)|}]^{T_{\Gamma}}$ therefore picks out exactly those lattice points with components from the same Minkowski sum $L\circ P_3,$ which consistently weight the edges of $\Gamma,$ see Figure \ref{fig:identification}. By definition, the invariant subalgebra is the affine semigroup algebra $\C[P_{\Gamma}]$, this proves Theorem \ref{degensl2}.

\begin{figure}[htbp]
\centering
\includegraphics[scale = 0.4]{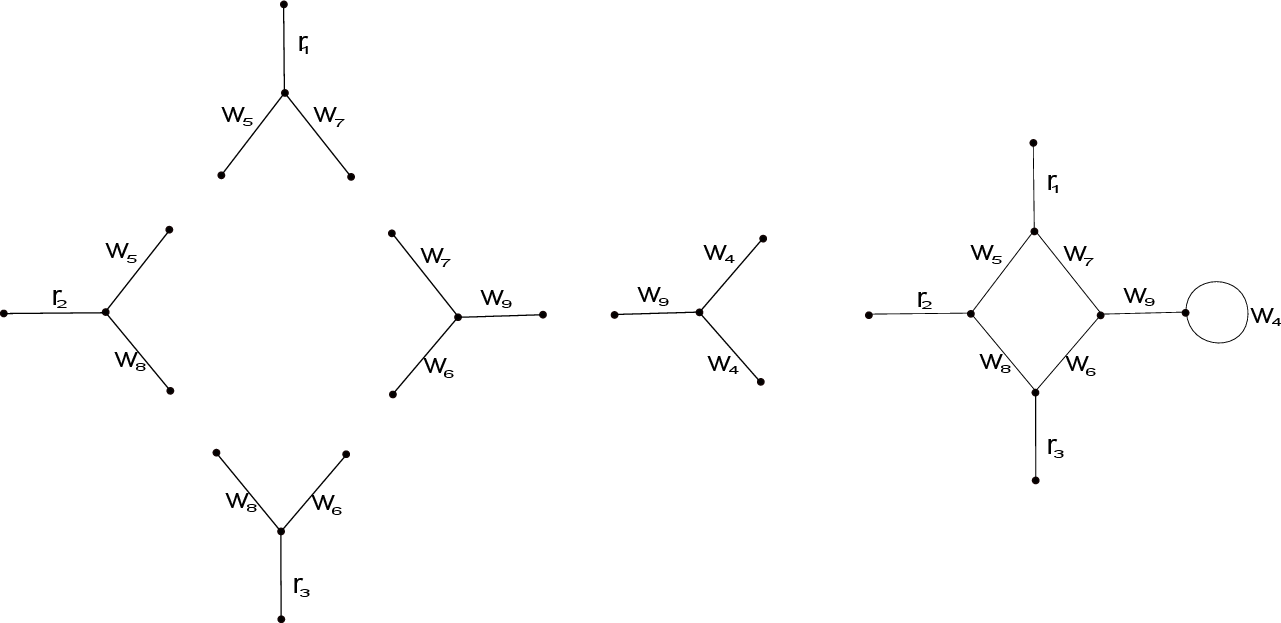}
\caption{A $T_{\Gamma}$ invariant.}
\label{fig:identification}
\end{figure}

\subsection{$P_{\Gamma}$ is a Newton-Okounkov body}\label{NOKbody}

As an application of Propositions \ref{factor} and \ref{filterblocks} we interpret the polytope $P_{\Gamma}$ as a Newton-Okounkov body for the algebra of conformal blocks over the marked curve of type $\Gamma$.  The results of this subsection will not be needed elsewhere in the paper.  For background on the theory of Newton-Okounkov bodies we refer the reader to the papers of Kaveh, Khovanskii \cite{KK} and Lazarsfeld, Musta{\c{t}}{\u{a}} \cite{LM}.   A Newton-Okounkov body $P$ of a graded algebra $A = \bigoplus_{L \geq 0} A_L$  is a combinatorial invariant associated to a choice of a rank $a+1$ valuation $v: A \to \Z^{a+1}$  (see \cite[Definition 2.8]{KK}) which refines the grading on $A$. Here $a+1$ is the Krull dimension of $A$ and $\Z^{a+1}$ is given a lexicographic ordering which prioritizes the grading component. In particular, we assume that $v(f) = (\ldots, deg(f)) \in \Z^{a+1}$ for $f$ homogeneous.  The image $v(A) \subset \Z^{a+1}$ is an affine semigroup (see \cite[Proposition 2.10]{KK}), and its convex hull $\bar{P} \subset \R^{a+1}$ is a convex cone.   The slice $P = \bar{P}\cap \R^a\times \{1\} \subset \R^{a+1}$ is called the Newton-Okounkov body of $A$ associated to $v.$

Now we fix $A = \mathcal{V}^{\dagger}_{C, \vec{p}}(SL_2(\C))$, where $(C, \vec{p})$ is the stable curve of type $\Gamma.$  The following Proposition is a direct consequence of Propositions \ref{factor} and  \ref{filterblocks}.

\begin{proposition}\label{porder}
As a vector space, the algebra $\mathcal{V}^{\dagger}_{C, \vec{p}}(SL_2(\C))$ is a direct sum of one dimensional spaces $\C\phi_{w,L}$, where $w$ is a lattice point in the $L$-th Minkowski sum $L\circ P_{\Gamma}.$  The product $\phi_{w, L}\phi_{w', K}$ is a sum of the form $\phi_{w + w', L+K} + \sum_{u < w + w'} C_u\phi_{u,L+K}$, where the partial ordering $u < w + w'$ means that $u(e) \leq w(e) + w'(e)$ for each edge $e \in E(\Gamma)$ and there is some edge where this inequality is strict. 
\end{proposition}

By placing a total ordering $\ll$ on the edge set $E(\Gamma)$ we can produce a lexicographic ordering $\prec$ on the basis members $\phi_{w, L}$ as follows.  We say $\phi_{w, L} \prec \phi_{w', K}$ if $(w(e_1), \ldots, w(e_{|E(\Gamma)|}), L)$ is less than $(w'(e_1), \ldots, w(e_{|E(\Gamma)|}), K)$ in the lexicographic ordering, where the set $E(\Gamma)$ has been placed in bijection with the integers $\{1, \ldots, |E(\Gamma)|\}$ using $\ll$.  Using $\prec,$ we define a filtration on $\mathcal{V}^{\dagger}_{C, \vec{p}}(SL_2(\C))$ by the vector spaces $F_{\Gamma}^{\preceq w} = \bigoplus_{w' \preceq w} \C\phi_{w'}$. Proposition \ref{porder} (along with Propositions \ref{factor} and \ref{filterblocks}) implies the following proposition. 

\begin{proposition}
The filtration $F_{\Gamma}$ is an algebra filtration of $\mathcal{V}_{C, \vec{p}}^{\dagger}(SL_2(\C))$.  The associated graded algebra of the filtration $F_{\Gamma}$ is isomorphic to the affine semigroup algebra $\C[P_{\Gamma}].$ 
\end{proposition}

As $\C[P_{\Gamma}]$ is a domain, the filtration $F_{\Gamma}$ can be used to define a valuation $v_{\Gamma}$, see \cite[Proposition 3.2]{M14}:

\begin{equation}
f \in \mathcal{V}_{C, \vec{p}}^{\dagger}(SL_2(\C)) \ \ \ \ v_{\Gamma}(f) = Min\{ w | f \in F_{\Gamma}^{\preceq w}\}.\\
\end{equation}

\begin{proposition}\label{NOK}
The Newton-Okounkov body of $\mathcal{V}_{C, \vec{p}}^{\dagger}(SL_2(\C))$ with respect to the valuation $v_{\Gamma}$ is the polytope $P_{\Gamma}.$
\end{proposition}

\begin{proof}
The image of $v_{\Gamma}$ is the set of all $(w, L)$ for $w$ a lattice point in $L \circ P_{\Gamma}.$  It follows that the level $1$ slice can be identified with the closure of the set of all $\frac{1}{L}w$; this is $P_{\Gamma}$ itself. 
\end{proof}

\begin{remark}
An identical construction to the one given above shows that the polytope $P_{\Gamma}(\vec{r}, L)$ studied in \cite{M} and \cite{M3} is a Newton-Okounkov bodies for $R_{C, \vec{p}}(\vec{r}, L)$, where $(C, \vec{p})$ is the stable curve of type $\Gamma$. 
\end{remark}

For each connected, trivalent graph $\Gamma$ with $\beta_1(\Gamma) = g$ and $n$ leaves, Theorem \ref{degensl2} produces a tool to study $\mathcal{V}_{C, \vec{p}}^{\dagger}(SL_2(\C))$ for generic $(C, \vec{p})$. We prove Theorem \ref{sl2} by focusing on the algebra $\C[P_{\Gamma_{g, n}}]$ attached to a particular graph $\Gamma_{g, n},$ see Figure \ref{fig:virus}.

\begin{figure}[htbp]
\centering
\includegraphics[scale = 0.35]{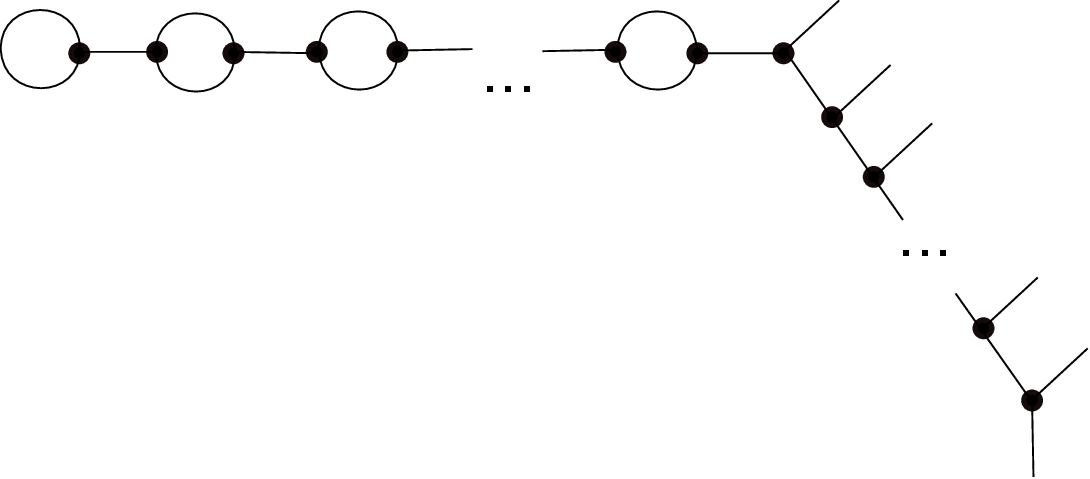}
\caption{The graph $\Gamma_{g, n}$}
\label{fig:virus}
\end{figure}

\subsection{Generators}

The following three lemmas show that $P_{\Gamma_{g, n}}$ and the second Minkowski sum $2 \circ P_{\Gamma_{g, n}}$ suffice to generate $\C[P_{\Gamma_{g, n}}]$.

\begin{lemma}\label{g1}
The case $(g, n)$ follows from cases $(g, 1)$ and $(0, n + 1)$
\end{lemma}

\begin{lemma}\label{2L+1}
The multiplication map $\pi: P_{\Gamma_{g, 1}}\times 2L\circ P_{g, 1} \to (2L+1)\circ P_{\Gamma_{g, 1}}$ is surjective.
\end{lemma}

\begin{lemma}\label{2L}
The multiplication map $\pi: [2\circ P_{\Gamma_{g, 1}}]^{\times L} \to 2L\circ P_{\Gamma_{g, 1}}$ is surjective.
\end{lemma}

In \cite{BW} Buczy\'{n}ska and Wi\'{s}niewski show that $\C[P_{\Gamma_{0, n}}]$ is generated by the lattice points of $P_{\Gamma_{0, n}}$, and that the ideal of relations on these generators has a quadratic, square-free Gr\"obner basis. It follows that $P_{0, n}$ is generated by weightings which weight all edges $e \in E(\Gamma_{0, n})$ with $0$ or $1.$

\subsection{Proof of Lemma \ref{g1}}

We prove Lemma \ref{g1} with a toric fiber product argument. The weight on any edge of $\Gamma_{g, n}$ coming from a lattice point of $L\circ P_{\Gamma_{g, n}}$ must be less than or equal to $L$, this follows directly from the defining inequalities. The lattice under consideration forces each horizontal edge in $\Gamma_{g, 1}$ to be weighted with an even number, including the leaf edge. We consider an element of $L\circ P_{\Gamma_{g, n}}$ ($g > 0$) as an element of $L\circ P_{\Gamma_{g, 1}}$ glued to an element of $L\circ P_{\Gamma_{0, n+1}},$ see Figure \ref{Fig2}. 

\begin{figure}[htbp]
\centering
\includegraphics[scale = 0.5]{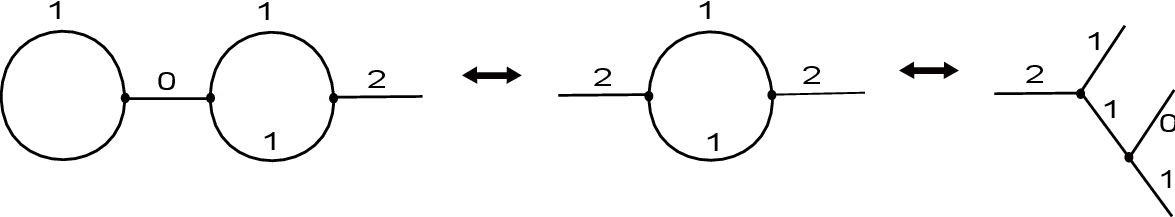}
\caption{}
\label{Fig2}
\end{figure}

Let $\omega \in L\circ P_{\Gamma_{g, n}},$  and let $\omega^g \in L\circ P_{\Gamma_{g, 1}}$ and $\omega^0 \in L\circ P_{\Gamma_{0, n+1}}$ be the restrictions of $\omega$ to the copies of $\Gamma_{g, 1}$ and $\Gamma_{0, n+1}$ in $\Gamma_{g, n}$ respectively.  By the theorem
of Buczy\'{n}ska and Wi\'{s}niewski, $\omega^0$ decomposes into $L$ weightings of $\Gamma_{0, n+1}$.

\begin{equation}
\omega^0 = \eta_1 + \ldots + \eta_L\\
\end{equation}

Now suppose that the generation statement of Theorem \ref{main} holds for $P_{\Gamma_{g, 1}},$ then 
$\omega^g$ likewise decomposes into elements of degree $1$ and $2$.

\begin{equation}
\omega^g = [\alpha_1 + \ldots + \alpha_k] + [\beta_1 + \ldots + \beta_m]\\
\end{equation}

\noindent
Here $2m + k = L$.  Due to the restrictions imposed by the lattice, the $\alpha_i$ all have weight $0$ on the edge shared by $\Gamma_{g, 1}$ and $\Gamma_{0, n+1}.$  We let $\eta_1, \ldots, \eta_{L'}$ and $\beta_1, \ldots, \beta_m'$ be the elements with a non-zero weight on this edge
in $P_{\Gamma_{0, n+1}}$ and $P_{\Gamma_{g, 1}}$ respectively. We must have $L' = 2m',$ so we may pair up the elements $\eta_1 + \eta_2, \ldots$ to make $m'$ elements in $2\circ P_{\Gamma_{0, n+1}},$ and we can glue these to the $\beta_i$ along the shared edge.  What remains must be exactly $k'$ $\eta_i'$s and $k'$ $\alpha_i'$s, both with $0$ along the shared edge, which can then likewise be glued along the edge in any order.

\subsection{Proof of Lemma \ref{2L+1}}

We prove Lemma \ref{2L+1} with an analysis of weightings on the component graphs in Figure \ref{Fig3}. We define the graded semigroups $B_1$ and $B_2$ accordingly, these semigroups are composed of those weightings on the pictured graphs which are even on the leaf edges. We let $o_1$ and $o_2$, be elements which weight the edges in a loop $1$ and all other edges $0$, in $B_1$ and $B_2$ respectively.  The following proposition is the crux of Lemmas \ref{2L+1} and \ref{2L}.

\begin{figure}[htbp]
\centering
\includegraphics[scale = 0.5]{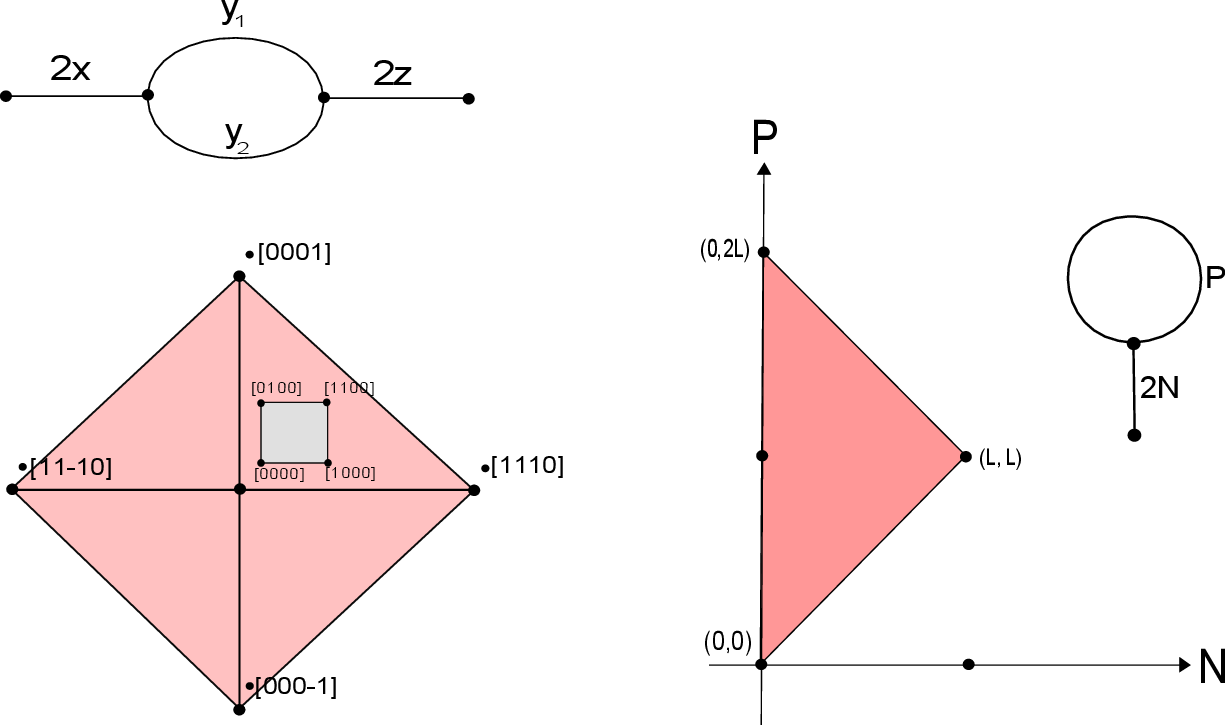}
\caption{Polytopes of weightings from $B_1(2L)$ and $B_2(2).$}
\label{Fig3}
\end{figure}

\begin{proposition}\label{bb}
The semigroups $B_1$ and $B_2$ are generated by elements of degrees $1,$ and $2$ (these are the elements appearing in  Figures \ref{Fig5} and \ref{Fig6} respectively).
\end{proposition}

\begin{proof}
As $B_1$ and $B_2$ are both the semigroup of lattice points in a convex cone, we directly verify that the elements of degrees $1$ and $2$ generate by checking that they constitute a \emph{Hilbert basis} using the software package Macaulay 2 \cite{mac2} or 4ti2 \cite{4ti2}. This software uses the project-and-lift algorithm, see \cite{Hemmecke}.
\end{proof}

\begin{figure}[htbp]
\centering
\includegraphics[scale = 0.35]{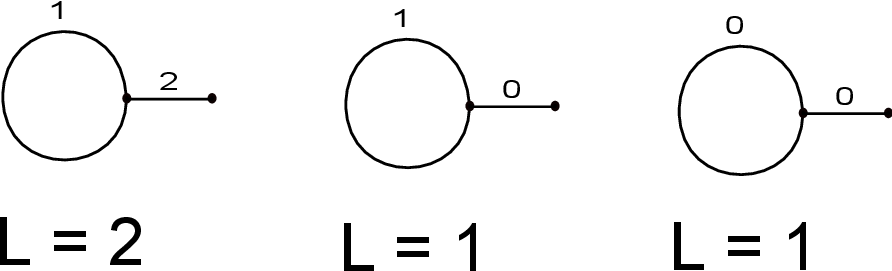}
\caption{}
\label{Fig5}
\end{figure}

\begin{figure}[htbp]
\centering
\includegraphics[scale = 0.35]{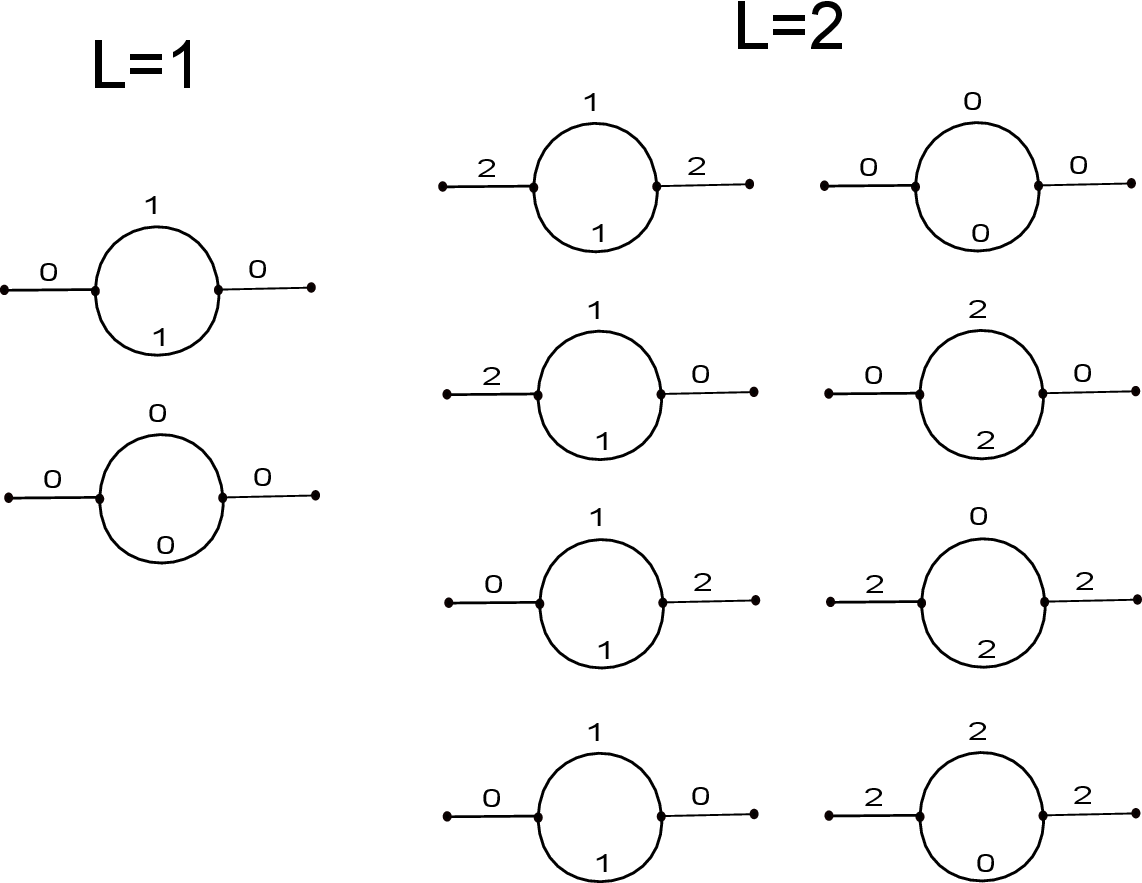}
\caption{}
\label{Fig6}
\end{figure}

It follows that all generators weight the leaf edges of either graph with either $0$ or $2.$ We prove Lemma \ref{2L+1} by pulling all copies of $o_1$ and $o_2$ off the loops in $\Gamma_{g, 1}.$ If a loop has a vertex with incident entries adding to $2L+1$ we can extract an $o_1$ or $o_2$ from that loop, otherwise we leave it alone.   The resulting collection of loops is a member of $P_{\Gamma_{g, n}},$ and the remainder is an element of $2L \circ P_{\Gamma_{g, n}},$ see Figure \ref{level1}.

\begin{figure}[htbp]
\centering
\includegraphics[scale = 0.35]{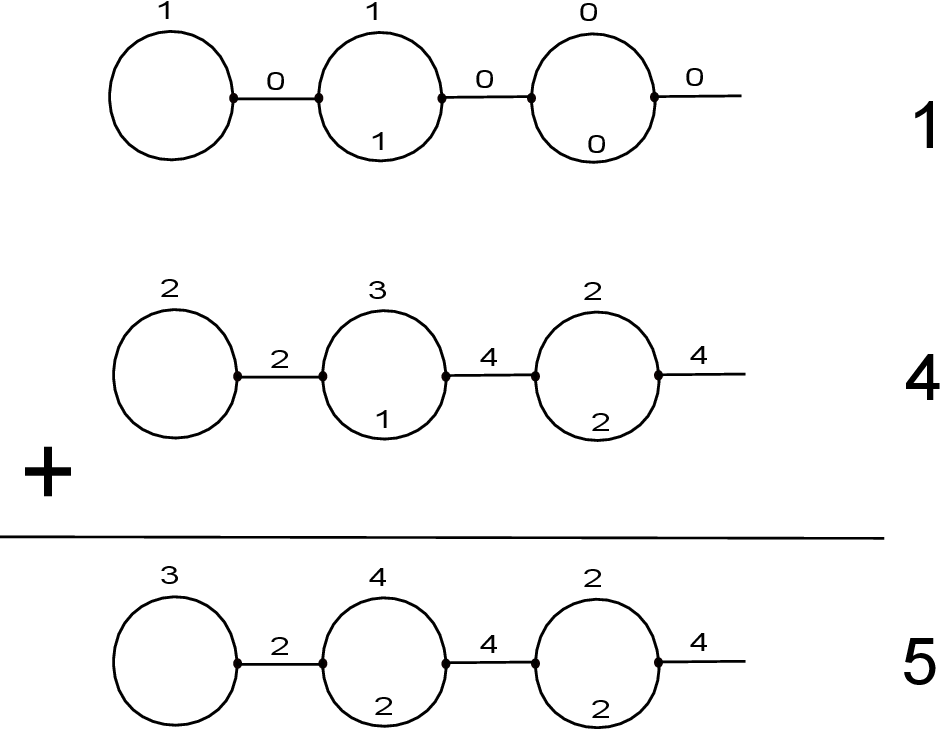}
\caption{}
\label{level1}
\end{figure}

\subsection{Proof of Lemma \ref{2L}}

A weighting of $\Gamma_{g, 1}$ automatically induces a weighting of $\Gamma_{g-1, 1}$ and an element of $B_2$.  Now we are in a position to use an argument similar to the proof of Lemma \ref{g1}. The theorem is true for $\C[P_{\Gamma_{1, 1}}] = \C[B_1]$, suppose that it holds for $\Gamma_{g -1, 1}.$  For an element $\omega \in 2L\circ P_{\Gamma_{g, 1}}$ we consider
the restrictions $\omega_1, \omega_2$ to $\Gamma_{g-1, 1}$ and $B_2$ respectively. 
By induction, $\omega_1$ and $\omega_2$ can be factored into $L$ elements of $2\circ P_{\Gamma_{g-1, 1}}$ and $B_2(2)$, respectively. 

\begin{equation}
\omega_1 = \alpha_1 + \ldots + \alpha_L, \ \ \omega_2 = \beta_1 + \ldots + \beta_L\\
\end{equation}

The number of $\alpha_i$ which weight the edge shared by $\Gamma_{g-1, 1}$ and $B_2$ with a $2$
must equal the number of $\beta_i$ which also weight this edge $2.$ Elements which weight this edge the same can therefore be matched up, this proves Lemma \ref{2L}.

\subsection{Relations}

We will describe relations for the building block semigroups $B_1, B_2$, then we show that these relation results
are stable as these building blocks are glued together.  Analysis of the building block semigroups gives the $(g, 1)$ case, which we combine with results in \cite{BW} to handle the $(g, n)$ case. 

The semigroup algebra $\C[B_1]$ is generated by two elements of level $1$ and an element of level $2,$ and no relations hold between these elements.  The generators of the semigroup algebra $\C[B_2]$ are given in Figure \ref{Fig5}; a Markov basis for the toric ideal vanishing on these generators can be computed using the software package 4ti2 \cite{4ti2}. This computation shows that the relations for $\C[B_2]$ are generated by those of of levels $2$, $3$, and $4$, we omit the details.

\subsection{Relations for $\C[P_{\Gamma_{g, 1}}]$}

We describe relations for $\C[P_{\Gamma_{g, 1}}]$ by building relations for this semigroup out those of $B_1$ and $B_2.$  The theorem holds for $B_1$ because this semigroup has unique factorization, this is the base case of our induction.  To treat $P_{\Gamma_{g, 1}}$, we consider it as a toric fiber product of $P_{\Gamma_{g-1, 1}}$ with $B_2$ over the semigroup of non-negative integer points $(n, L)$ with $2n \leq L$. This semigroup
is generated by $(0, 1)$ and $(1, 2)$, and also has unique factorization.  The maps from $P_{\Gamma_{g-1, 1}}$ and $B_2$ to this semigroup are computed by sending a lattice point to half the weight on the edge shared by the supporting graphs of these semigroups, and the level of the weightings. Given an element $\omega \in P_{\Gamma_{g, 1}}$, and two factorizations we first collect level $1$ terms into level $2$ terms; this gives factorizations into only level $2$ elements in the even level case, and level $2$ elements except one level $1$ element in the odd case.  Note that in doing so, we have used at most level $2$ relations.  

Now we consider the restriction of this relation to $P_{\Gamma_{g-1, 1}}$.  By induction, 
there is a way to transform $\sum \beta_i^*|_{g-1, 1}$ into $ \sum \alpha_j^*|_{g-1, 1}$
using degree $2, 3, 4$ relations.  We claim that each of these relations can be lifted
to a relation in $P_{\Gamma_{g, 1}}.$ Any such relation must preserve the list of values
along the shared edge, this means that we can ``unglue'' the $B_2$ side of the weightings involved
in the relation, perform the relation on the $(g-1, 1)$ side, and glue the $B_2$ weightings back on in 
any way consistent with their values along the shared edge to obtain a relation on $P_{\Gamma_{g,1}},$ see Figure \ref{Fig8}.

\begin{figure}[htbp]
\centering
\includegraphics[scale = 0.5]{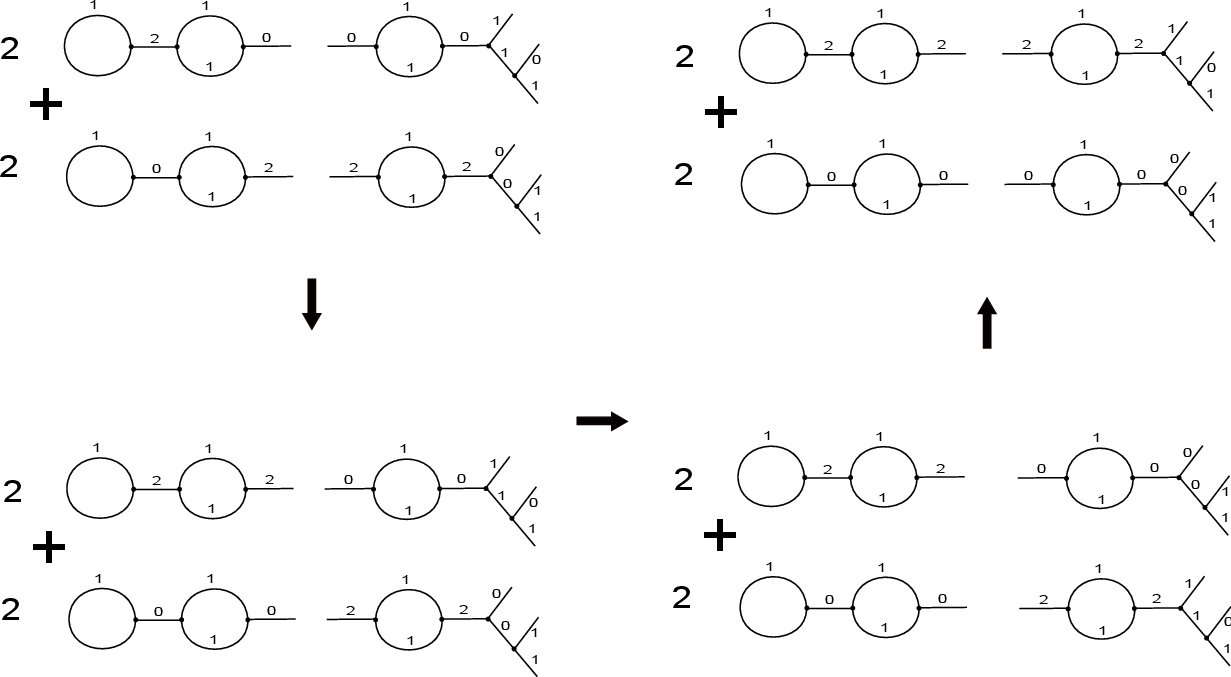}
\caption{}
\label{Fig8}
\end{figure}

This proves that we can transform $\sum \beta_i^*$ to  $\sum \alpha_j^*$ ``along $P_{\Gamma_{g-1, 1}}$.''
But we can now apply the same argument to perform this transformation over $B_2$, using the
relations above.  This completes the proof of the relation statement in Theorem \ref{sl2}
for $(g, 1).$

\subsection{Relations for $\C[P_{\Gamma_{g, n}}]$}

The same argument used in the case $P_{\Gamma_{g, 1}}$ to extend relations from $P_{\Gamma_{g-1, 1}}$ to $P_{\Gamma_{g, 1}}$ can 
be used to show that any relation on $P_{\Gamma_{g, 1}}$ can be extended to a relation on $P_{\Gamma_{g, n}}$.  From this
it follows that given two normalized ways to represent an element $\omega = \sum \beta_i^* = \sum \alpha_j^*,$
one can be transformed to the other ``over $P_{\Gamma_{g, 1}}$'' with relations of degree $2, 3, 4.$  Now the same argument
can be applied over the edge which separates $P_{\Gamma_{0, n+1}}$ from $P_{\Gamma_{g, 1}}$ using the degree $2$  $P_{\Gamma_{0, n+1}}$ relations discovered in \cite{BW}.  Our analysis takes care of Theorem \ref{degensl2} for $n > 0$, the $n=0$ case is handled by noting that $\C[P_{\Gamma_{g, 0}}] \subset \C[P_{\Gamma_{g, 1}}]$ is spanned by the set of elements with $0$ weighting the leaf, and all of our techniques specialize to this case without alteration.

\subsection{Best possible generation of $\C[P_{\Gamma}]$}

We sketch how our quadratic generation result is in a certain sense best possible for the semigroup algebras $\C[P_{\Gamma}]$ when $g > 0$. 
We say a trivalent graph $\Gamma'$ is a trivalent minor of a trivalent graph $\Gamma$ if $\Gamma'$
can be obtained from a subgraph of $\Gamma$ by replacing pairs of edges joined at a 
bivalent vertex with a single edge, and any leaf of $\Gamma'$ is also a leaf of $\Gamma.$ 

\begin{proposition}
If $\Gamma$ contains a trivalent minor isomorphic to one of the graphs depicted in Figure \ref{counter}, 
then $\C[P_{\Gamma}]$ has indecomposable elements of degree $\geq 2.$
\end{proposition}

\begin{figure}[htbp]
\centering
\includegraphics[scale = 0.4]{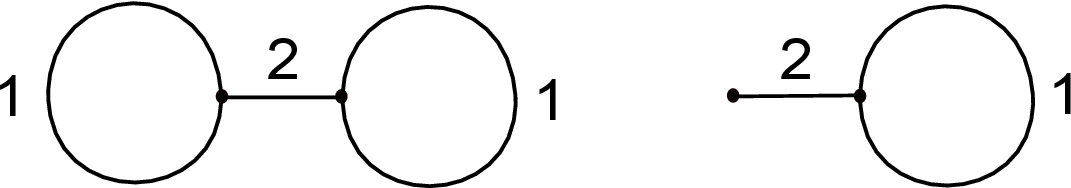}
\caption{Trivalent minors with indecomposable weightings}
\label{counter}
\end{figure}

This proposition holds for the graphs depicted in Figure \ref{counter}, and any indecomposable element on a trivalent minor of a graph $\Gamma$ gives an indecomposable on $\Gamma$ by extending this element by $0$ to the rest of $\Gamma$.   Notice that any trivalent graph with $g, n \geq 1$ contains the graph on the right in Figure \ref{counter} as a trivalent minor, for this reason we focus on $\Gamma$ with $n = 0.$  For $g = 2, 3, 4,$
it can be shown that the graphs depicted in Figure \ref{counter2} do not contain either of the graphs in Figure \ref{counter}, and that these are the only trivalent graphs with this property.    Any trivalent graph $\Gamma$ with $g \geq 5$ which does not have either  graph in Figure \ref{counter} as a trivalent minor must have the graph on the right in Figure \ref{counter2} as a $g = 4$ minor.  Any extension of this graph to a trivalent $g = 5$ graph can then be checked to have the graph on the left hand side of Figure \ref{counter} as a trivalent minor.  This shows our results are best possible, excluding the cases $n =0, g = 2, 3, 4$.

\begin{figure}[htbp]
\centering
\includegraphics[scale = 0.5]{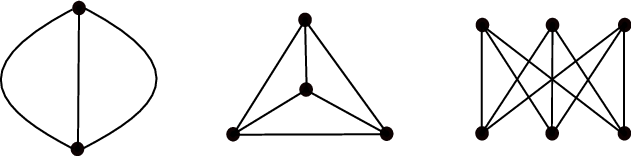}
\caption{}
\label{counter2}
\end{figure}

\section{Acknowledgements}
We thank  Weronika Buczy\'{n}ska, Edward Frenkel, Noah Giansiracusa, Kaie Kubjas, Shrawan Kumar, Eduard Looijenga, Diane Maclagan, John Millson, Steven Sam, Bernd Sturmfels, David Swinarski, and Fillipo Viviani for useful discussions. We also thank the reviewer for many helpful suggestions.  This paper was mostly written at the fall 2009 Introductory Workshop in Tropical Geometry at MSRI.

\bibliographystyle{alpha}
\bibliography{Biblio}

\bigskip
\noindent
Christopher Manon:\\
Department of Mathematics,\\ 
George Mason University,\\ 
Fairfax, VA 22030 USA

\end{document}